%Last update by ak Jan 30, 2014

\documentclass[12pt]{article}
\usepackage{amsmath,amsthm}
\usepackage{amssymb}
\usepackage{color}
%\linespread{1.5}

%\setlength{\textwidth}{15cm}%114.0truemm}
%\setlength{\textheight}{22cm}%{181.0truemm}
\setlength{\textwidth}{152.4truemm}
\setlength{\textheight}{228.6truemm}
\setlength{\oddsidemargin}{3.6mm} \setlength{\evensidemargin}{3.6mm}
\setlength{\topmargin}{-12.5truemm}

\newtheorem{theorem}{Theorem}[section]
\newtheorem{lemma}[theorem]{Lemma}

\newtheorem{example}[theorem]{Example}

\newcommand{\contradiction}{{\hbox{%
    \setbox0=\hbox{$\mkern-3mu\times\mkern-3mu$}%
    \setbox1=\hbox to0pt{\hss$\times$\hss}%
    \copy0\raisebox{0.5\wd0}{\copy1}\raisebox{-0.5\wd0}{\box1}\box0
}}}

\usepackage{colortbl}

\renewcommand{\geq}{\geqslant}
\renewcommand{\leq}{\leqslant}

\def\cref#1{Corollary~$\ref{#1}$}

\input epsf

\pagestyle{plain}
\begin{document}

\title{Direct constructions for general families of\\
cyclic mutually nearly orthogonal Latin squares}
\date{}
\author
{Fatih Demirkale\thanks{Author was also supported in part
by the  Australian Research Council (grant number DP1092868)}\\
Department of Mathematics, Ko\c{c} University\\
Istanbul 34450, Turkey\\
{\tt fdemirkale@ku.edu.tr}\vspace{3mm} \\
Diane Donovan\thanks{Author was also supported in part  by the
Australian Research Council (grant number DP1092868)}\\
Department of Mathematics\\
The University of Queensland\\
Brisbane, QLD Australia\\
{\tt dmd@maths.uq.edu.au}\vspace{3mm} \\
Abdollah Khodkar\thanks{Corresponding author}\\
Department of Mathematics\\
University of West Georgia\\
Carrollton, GA 30118, USA\\
{\tt akhodkar@westga.edu}\\
}

\maketitle
% ----------------------------------------------------------------
\begin{abstract}
\noindent Two Latin squares $L=[l(i,j)]$ and $M=[m(i,j)]$, of even
order $n$ with entries $\{0,1,2,\ldots,n-1\}$, are said to be
nearly orthogonal if the superimposition of $L$ on $M$ yields an
$n\times n$ array $A=[(l(i,j),m(i,j))]$ in which each ordered pair
$(x,y)$, $0\leq x,y\leq n-1$ and $x\neq y$, occurs at least once
and the ordered pair $(x,x+n/2)$ occurs exactly twice. In this
paper, we present direct constructions for the existence of
general families of three cyclic mutually orthogonal Latin squares
of orders $48k+14$,
$48k+22$, $48k+38$ and $48k+46$. The techniques employed are based
on the principle of Methods of Differences and so we also
establish infinite classes of ``quasi-difference'' sets for these
orders.\vspace{3mm}

\noindent {\bf Keywords: Latin squares, orthogonal Latin squares,
nearly orthogonal Latin squares, quasi-difference sets}
\end{abstract}

\section{Introduction}

A {\em Latin square}, $L=[l(i,j)]$, of order $n$ is an $n\times n$
array in which each row and each column contains each of the
symbols $0,1,\dots, n-1$ precisely once. Given two Latin squares
$L=[l(i,j)]$ and $M=[m(i,j)]$, of order $n$, we define the
superimposition of $L$ on $M$ to be the $n\times n$ array
$A=[(l(i,j),m(i,j))]$, so the cell $(i,j)$ of $A$ contains the
ordered pair $(l(i,j),m(i,j))$. The Latin squares $L$ and $M$ are
said to be {\em orthogonal} if each of the ordered pair $(x,y)$,
$0\leq x,y\leq n-1$, occurs in a cell of $A$. A set of $s$ {\em
mutually orthogonal Latin squares} (MOLS(n)) is a set of $s$ Latin
squares which are pairwise orthogonal.

Orthogonal Latin squares have wide ranging applications and have
consequently  been studied with great interest. However, there are
still many open questions relating to their existences. For
instance,  it is known that there does not exist a pair of
MOLS(6), however it is not known if there exists a set of three MOLS(10),
see \cite{colbourn}. In 2012,  Todorov established that there
exists a set of  four MOLS(14), but it is not known if there exists a set of five
MOLS(14), see \cite{todorov}. The order 22
is the largest order for which it is not known if there exists a
set of four MOLS(22).

In 2002, Raghavarao, Shrikhande and Shrikhande
suggested \cite{RSS} that  given the importance of their applications in experimental
design, the definition of MOLSs could be varied slightly to deal
with orders for which MOLSs are not known to exist. They suggested
that the orthogonality condition could be adapted in such a way
that identical pairs did not occur, $n$ specified pairs occurred
twice and all other pairs occurred precisely once.

Two Latin squares $L=[l(i,j)]$ and $M=[m(i,j)]$, of even order
$n$, are said to be {\em nearly orthogonal} \cite{RSS} if the
superimposition of $L$ on $M$ yields an $n\times n$ array
$A=[(l(i,j),m(i,j))]$ in which each ordered pair $(x,y)$, $0\leq
x,y\leq n-1$ and $x\neq y$, occurs at least once and the ordered
pair $(x,x+n/2)$ occurs exactly twice. As a consequence of the
definition, we note that none of the $n$ ordered pairs $(x,x)$,
$0\leq x\leq n-1$,  occurs in $A$. A set of $s$ {\em  mutually
nearly orthogonal Latin squares} (MNOLS(n)) is a set of $s$ Latin
squares which are pairwise nearly orthogonal.

It is known that there exist a set of three MNOLS(6),
a set of four MNOLS(10) and a set of four MNOLS(14), but no set of four MNOLS(6), see \cite{LvR, RSS},
raising interesting questions about the existence of sets of MNOLS($n$).
%In addition, MNOLSs have been shown to be important for the construction of $k$-plex orthogonal Latin
%squares, see Liang 2012, \cite{liang}.

Raghavarao, Shrikhande and Shrikhande, established the following
upper bound on the size of a set of MNOLSs of order $n$.

\begin{theorem} \cite{RSS} Let $L_1,L_2,\dots, L_t$ be $t$ Latin
squares of order $n=2m$ on symbols $\{0,1,2,\dots,n-1\}$ such that
each pair of Latin squares is nearly orthogonal. Then
\begin{eqnarray*}
t\leq \left\{\begin{array}{ll} \frac{n}{2}+1, &\mbox{if }n\equiv
2(\mbox{mod }4),\mbox{ or},\\
\frac{n}{2}, &\mbox{if }n\equiv 0(\mbox{mod }4).\end{array}\right.
\end{eqnarray*}
\end{theorem}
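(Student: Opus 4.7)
The plan is to translate near orthogonality into a Gram-matrix inequality via characters of $\Z_n$. Let $\omega=e^{2\pi i/n}$ and, for each $L_i$ and each $\alpha\in\Z_n$, let $u_i^\alpha\in\mathbb{C}^{n^2}$ be the vector with entry $\omega^{\alpha L_i(r,c)}$ at cell $(r,c)$. Near orthogonality pins down the cell-wise distribution of $L_j-L_i\pmod n$ exactly: the residue $0$ never occurs, the residue $n/2$ occurs $2n$ times, and each other residue occurs $n$ times. Using $\omega^{n/2}=-1$, a short calculation gives $\langle u_i^\alpha,u_j^\alpha\rangle=n\bigl(\omega^{\alpha n/2}-1\bigr)$ for $i\neq j$, so this inner product equals $-2n$ when $\alpha$ is odd and $0$ when $\alpha$ is a nonzero even; also $\|u_i^\alpha\|^2=n^2$.

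Fixing any odd $\alpha$, the $t\times t$ Gram matrix of $u_1^\alpha,\ldots,u_t^\alpha$ has diagonal $n^2$ and off-diagonal $-2n$. Its eigenvalue on the all-ones vector equals $n^2-2n(t-1)$, and positive semi-definiteness forces $t\leq n/2+1$, which is the bound we need when $n\equiv 2\pmod 4$.

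To obtain the strict improvement when $n\equiv 0\pmod 4$, I would assume $t=n/2+1$ and derive a contradiction. Equality in the PSD bound forces the all-ones vector to lie in the kernel of the Gram matrix, hence $\sum_{i=1}^t u_i^\alpha=0$; evaluating at a cell $(r,c)$ yields $\sum_{i=1}^t\omega^{\alpha L_i(r,c)}=0$ for every odd $\alpha$. Because no near-orthogonal superposition contains a diagonal pair $(x,x)$, the entries $L_1(r,c),\ldots,L_t(r,c)$ are pairwise distinct; call this set $S\subseteq\Z_n$, so $|S|=t$. The vanishing of $\sum_{x\in S}\omega^{\alpha x}$ over all odd $\alpha$ says $\widehat{\mathbf 1_S}$ is supported on even characters, which by Fourier inversion is equivalent to $\mathbf 1_S(x)=\mathbf 1_S(x+n/2)$. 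Thus $S$ is a disjoint union of pairs $\{x,x+n/2\}$ and hence $|S|$ is even, but $t=n/2+1$ is odd when $n\equiv 0\pmod 4$, a contradiction.

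The delicate step is the inner-product computation, which requires the exact multiplicities in the definition of near orthogonality; once the identity $\omega^{\alpha n/2}=(-1)^\alpha$ is in hand the rest is essentially bookkeeping, with the mod-$4$ dichotomy flowing from the parity of $n/2+1$.
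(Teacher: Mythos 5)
Your argument is correct; I checked each step. The exact multiplicity profile of the differences (residue $0$ never, $n/2$ exactly $2n$ times, every other residue exactly $n$ times) does follow from the definition by the counting identity $2n+n(n-2)=n^2$, the inner-product evaluation $\langle u_i^\alpha,u_j^\alpha\rangle=n(\omega^{\alpha n/2}-1)$ is right, the Gram matrix $(n^2+2n)I-2nJ$ gives $t\leq n/2+1$ from positive semidefiniteness, and in the equality case the kernel condition forces $\sum_i\omega^{\alpha L_i(r,c)}=0$ for all odd $\alpha$, whence the cell's symbol set $S$ satisfies $\mathbf 1_S(x)=\mathbf 1_S(x+n/2)$ and has even cardinality, contradicting the oddness of $n/2+1$ when $n\equiv 0\pmod 4$. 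Note, however, that the paper itself offers no proof of this statement: it is quoted verbatim from Raghavarao, Shrikhande and Shrikhande \cite{RSS} as background, so there is no in-paper argument to compare against. Your proof is essentially the character-theoretic (Fourier-diagonalized) form of the incidence-matrix argument one would expect from \cite{RSS}: writing $N_i$ for the $n^2\times n$ cell--symbol incidence matrix of $L_i$, near orthogonality says $N_i^TN_j=J-I+P$ with $P$ the permutation matrix of $x\mapsto x+n/2$, and your vectors $u_i^\alpha$ are exactly the columns of $N_i$ paired against the characters of $\Z_n$, which simultaneously diagonalize $J$, $I$ and $P$ with eigenvalues $0$, $1$ and $(-1)^\alpha$ on the nontrivial characters. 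What your presentation buys is a clean separation of the two cases: the bound comes from the odd characters alone, and the $n\equiv 0\pmod 4$ refinement drops out as a transparent parity obstruction on the per-cell symbol sets rather than a separate ad hoc count. The argument is self-contained and sound as written.
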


In the  paper \cite{RSS}, Raghavarao, Shrikhande and Shrikhande
used the principle of the Method of Differences to established a
construction for MNOLSs:

\begin{theorem}\label{rss} \cite{RSS} Let there exist $t$ column
vectors of length $2m$, denoted ${\cal C}_s=[c_s(i,0)]$, for $0\leq
i\leq 2m-1$ and $s=1,2,\dots,t$, where each column vector is a
permutation of the elements of the cyclic group ${\mathbb Z}_{2m}$.
Furthermore, suppose for every $ s\neq s'$, $1\leq s,s'\leq t$,
among the $2m$ differences
$c_s(1,0)-c_{s'}(1,0),c_s(2,0)-c_{s'}(2,0),\dots,c_s(2m-1,0)-c_{s'}(2m-1,0)$ modulo
$2m$, $m$ occurs twice and all other non-zero elements of ${\mathbb
Z}_{2m}$ occur once. Then ${\cal L}_s=[l_s(i,j)]$, where
$l_s(i,j)\equiv(c_s(i,0)+j)(\mbox{mod }2m)$ for $0\leq j\leq 2m-1$
and $s=1,2,\dots,t$, forms a set of $t$ MNOLS(2m).
\end{theorem}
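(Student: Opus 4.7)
The plan is to verify two things in order: \emph{(i)} each array $\mathcal{L}_s$ is genuinely a Latin square, and \emph{(ii)} any two $\mathcal{L}_s$, $\mathcal{L}_{s'}$ with $s\neq s'$ satisfy the near-orthogonality condition. Step \emph{(i)} is almost immediate: row $i$ of $\mathcal{L}_s$ equals $\{c_s(i,0)+j\pmod{2m}:0\le j\le 2m-1\}$, a cyclic translate of $\mathbb{Z}_{2m}$; column $j$ equals $\mathcal{C}_s+j\pmod{2m}$, which is a permutation since $\mathcal{C}_s$ is. So the Latin property follows automatically from $\mathcal{C}_s$ being a permutation together with the additive definition of $l_s(i,j)$.

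The substance lies in Step \emph{(ii)}. The key observation I would exploit is translation invariance in $j$: for any column $j$,
\[
l_s(i,j)-l_{s'}(i,j)\equiv c_s(i,0)-c_{s'}(i,0)\pmod{2m},
\]
so the row-wise difference depends only on the row index $i$. Fixing an ordered pair $(x,y)\in\mathbb{Z}_{2m}\times\mathbb{Z}_{2m}$, I would count the cells $(i,j)$ whose superimposed entry equals $(x,y)$: this forces $c_s(i,0)-c_{s'}(i,0)\equiv x-y\pmod{2m}$ and then $j\equiv x-c_s(i,0)\pmod{2m}$ is uniquely determined by $i$. Thus the number of occurrences of $(x,y)$ equals the number of rows $i$ at which the difference $c_s(i,0)-c_{s'}(i,0)$ attains the value $d:=x-y$.

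I would then feed this into the hypothesis on the difference multiset. For $d\ne 0$ and $d\ne m$, exactly one such $i$ exists, so $(x,y)$ appears exactly once; for $d=m$, exactly two such $i$ exist, so the pair $(x,x+m)$ appears exactly twice, matching the ``exactly twice'' requirement for $(x,x+n/2)$. For $d=0$, I would argue by elimination: the multiset of $2m$ differences consists of $2m-2$ non-zero, non-$m$ singletons plus the value $m$ with multiplicity $2$, giving $(2m-2)+2=2m$ elements; there is no room left for $d=0$. Hence no pair $(x,x)$ appears, confirming the ``$x\ne y$'' clause in the definition of near orthogonality.

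Pulling these observations together finishes the argument. There is no real obstacle, only bookkeeping: the construction is essentially a developed design under the $\mathbb{Z}_{2m}$-action on columns, and the hypothesis on the differences has been tailored precisely so that the induced pair-frequencies in the superimposition match the definition of MNOLS. The only place I would be careful is the accounting for $d=0$, which is not mentioned in the hypothesis and must be deduced from the cardinality of the difference multiset.
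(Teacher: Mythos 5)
Your proof is correct. Note that the paper itself gives no proof of this statement---it is quoted from the reference [RSS] and used as a black box---so there is nothing to diverge from; your argument (translation invariance of the row-wise difference $l_s(i,j)-l_{s'}(i,j)$, reduction of pair-counts to the multiplicities in the difference multiset, and the cardinality argument $\left(2m-2\right)+2=2m$ ruling out the difference $0$) is exactly the standard Method of Differences reasoning that underlies every construction in the paper, including the correct handling of the one point the theorem's hypothesis leaves implicit, namely that $(x,x)$ never occurs.
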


The MNOLSs, ${\cal L}_s$, constructed as in Theorem \ref{rss} will be
termed {\em cyclic} MNOLSs.
In \cite{LvR}, it was proven that there exist two cyclic MNOLSs of
order $2m$ for all $m\geq 2$.  In the same paper, it was also proven
that there exist three MNOLS(2m) for all $2m\geq 358$.
But the existence of three cyclic MNOLSs of order $2m$ is still
open.

In this paper, we prove the existence of general families of
column vectors which establish the existence of  three
cyclic MNOLSs of orders $48k+14$, $48k+22$, $48k+38$ and $48k+46$
for all $k\in {\mathbb Z}^+\cup\{0\}$.
%Specifically, for orders less than $358$ the constructions show
%that there exist three cyclic MNOLSs of the following orders.
%$$\begin{array}{|ccccccc|ccccccc|}\hline
%62& 110&158&    206&    254&    302&    350&
%22& 70& 118&    166&    214&    262&    310\\
%38& 86& 134&    182&    230&    278&    326& 46& 94& 142&    190&
%238&    286&    334\\ \hline
%\end{array}$$
Since the constructions are based on
the principle of Methods of Differences the paper also establishes
infinite classes of ``quasi-difference'' sets for these orders,
which may have  applications in the theory of orthomorphisms, see \cite{wanless,evans}.

The Latin squares generated here will be of even order and cyclic.
In addition, they will all have the following property. We will
say that the column vector ${\cal C}$ has the {\em reflection
property}, if $c(i,0)+c(n-1-i,0)\equiv n-1$ (mod $n$) for all
$i=0,\dots,(n-2)/2.$ Further we will say that MNOLSs developed
from such column vector, also have the reflection property.

\begin{example}\label{example1}
Let $V_1=\{(i,0,i),\mid 0\leq i\leq (n-2)/2\}$ and
$\overline{V_1}=\{(n-1-i,0,n-1-i))\mid 0\leq i\leq (n-2)/2\}.$
Then ${\cal C}_1=V_1\cup \overline{V_1}$ has the reflection
property. Let ${\cal L}_1=[l_1(i,j)]$, where
$l_1(i,j)\equiv(c_1(i,0)+j)(\mbox{mod }n)$
for $0\leq j\leq n-1$. Then ${\cal L}_1$ is also said to have the reflection property.
\end{example}

In subsequent sections, the symbol $\contradiction$ has been used to represent ``a contradiction''.

%%%%%%%%%%%%%%%%%%%%%%%%%%%%%%%%%%%%%%%%%%%%%%%%%%%%%%%%%%%%%%%%%%%%%%
%%%%%%%%%%%%%%%%%%%%%%%%%%%%%%%%%%%%%%%%%%%%%%%%%%%%%%%%%%%%%%%%%%%%%%%%%%%%%%%%
\section{Three cyclic MNOLSs of Order $48k+14$, $k\geq 0$}

In this section we construct two cyclic Latin squares ${\cal L}_2$ and
${\cal L}_3$ both of order $48k+14$ and show that ${\cal L}_1$
(constructed by Example \ref{example1} with $n=48k+14$),
${\cal L}_2$ and ${\cal L}_3$ are cyclic MNOLSs.
The following lemma is crucial in this section.

\begin{lemma}\label{lemma:gcd.1}
Let $k$ be an integer. Working modulo $48k+14$,
{\bf 1.} $\gcd(6k+2,24k+7)=1$;
{\bf 2.} $\gcd(12k+5,48k+14)=1$;
{\bf 3.} $\gcd(6k+1,24k+7)=1$;
{\bf 4.} $\gcd(12k+3,48k+14)=1$.
\end{lemma}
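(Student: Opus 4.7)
The plan is to dispatch all four statements by a single application of the Euclidean algorithm in each case, reducing the larger argument modulo the smaller and then checking coprimality against a small constant. The bound "working modulo $48k+14$" is not really a hypothesis on $k$; it is just the ambient ring in which these numbers will later be used, so for the gcd computation itself I can treat $k$ as an arbitrary integer.

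For part~{\bf 1}, I would write $24k+7 = 4(6k+2) - 1$, so $\gcd(6k+2,24k+7) = \gcd(6k+2,1) = 1$. For part~{\bf 2}, $48k+14 = 4(12k+5) - 6$ reduces the problem to $\gcd(12k+5,6)$; since $12k+5$ is odd and $12k+5 \equiv 2 \pmod{3}$, this gcd equals $1$. For part~{\bf 3}, $24k+7 = 4(6k+1) + 3$ reduces to $\gcd(6k+1,3)$, and $6k+1 \equiv 1 \pmod{3}$ finishes it. For part~{\bf 4}, $48k+14 = 4(12k+3) + 2$ reduces to $\gcd(12k+3,2) = 1$, since $12k+3$ is odd.

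There is no real obstacle: each item is one division step followed by an elementary parity or mod-$3$ check. The only mildly tricky aspect is noticing the right multiple of the smaller number to subtract (always $4$ times, giving a small absolute residue), so I would organise the proof as four one-line computations rather than invoking a general lemma.
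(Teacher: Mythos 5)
Your proof is correct: each of the four division identities checks out ($24k+7=4(6k+2)-1$, $48k+14=4(12k+5)-6$, $24k+7=4(6k+1)+3$, $48k+14=4(12k+3)+2$), and the residual checks modulo $2$ and $3$ are right, so all four gcds are $1$. The paper takes a slightly different route for parts \textbf{2}--\textbf{4}: instead of running the Euclidean algorithm down to a small constant, it exhibits a full B\'ezout identity in each case, e.g.\ $(8k+3)(12k+5)-(2k+1)(48k+14)=1$ and $(24k+5)(12k+3)-(6k+1)(48k+14)=1$. (For part \textbf{1} the two proofs coincide.) The difference matters beyond taste: the B\'ezout coefficients $4$, $8k+3$, $4k+1$ (equivalently $-(16k+6)$) and $24k+5$ are precisely the modular inverses invoked repeatedly in the later difference computations of Section 2 --- for instance $j-i\equiv 4(-3k-1)$ in Equation \eqref{eq:14-3} uses $4=(6k+2)^{-1}\bmod(24k+7)$, and $(8k+3)$ and $(24k+5)$ appear as inverses in Equations \eqref{eq:14-9} and \eqref{eq:14-18}--\eqref{eq:14-23}. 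Your argument proves the lemma as stated with less computation, but it does not hand you those explicit inverses; if you adopted it you would still need to compute them (by back-substituting through your Euclidean steps) before the rest of the section goes through.
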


\begin{proof}
The following equations verify the statements given in the lemma:
$
{\bf 1.\ } 4(6k+2)-(24k+7)=1;\
{\bf 2.\ } (8k+3)(12k+5)-(2k+1)(48k+14)=1;\
{\bf 3.\ } (4k+1)(24k+7)-(16k+6)(6k+1)=1;\
{\bf 4.\ } (24k+5)(12k+3)-(6k+1)(48k+14)=1.$
\end{proof}

\noindent Working modulo $48k+14$ we define the $(24k+7)\times 1$ matrices (column vectors)
$V_\alpha=[v_\alpha(i,0)]$, $\alpha=2,3$, by
\begin{eqnarray}
%V_1&=&\{(i,0,i),\mid 0\leq i\leq 24k+6\},\label{c1.1}\\
V_2&=&\{(2i,0,6k+1+i(12k+4))\mid 0\leq i\leq 12k+3\}\cup\nonumber\\
&&\{(2i+1,0,12k+3+i(12k+4))\mid 0\leq i\leq 12k+2\},\label{c2.1}\\
V_3&=&\{(2i,0,6k+2+i(12k+5))\mid 0\leq i\leq 12k+3\}\cup\nonumber\\
&&\{(2i+1,0,24k+8+i(12k+5))\mid 0\leq i\leq 12k+2\}.\label{c3.1}
\end{eqnarray}
 For $\alpha=2,3$, let
${\cal C}_\alpha=V_\alpha\cup \overline{V_\alpha}$, where
$$\overline{V_\alpha}=\{(48k+13-i,0,48k+13-v_\alpha(i,0))\mid 0\leq
i\leq 24k+6\}.$$
Note that ${\cal C}_\alpha$ has the reflection property.
Now define ${\cal L}_\alpha=[l_\alpha(i,j)]$, where
$l_\alpha(i,j)\equiv {\cal C}_\alpha(i,0)+j$ (mod $48k+14$) for $0\leq i,j\leq 48k+13$.

\begin{lemma}\label{distinct.entries.L2.1}
The array ${\cal L}_2$ is a Latin square of order $48k+14$, $k\geq 0$.
\end{lemma}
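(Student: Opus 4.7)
The plan is to exploit the fact that ${\cal L}_2$ is generated from ${\cal C}_2$ by the cyclic shifts $l_2(i,j)\equiv c_2(i,0)+j\pmod{48k+14}$: every row is automatically a complete residue system modulo $48k+14$, and each column is too precisely when the $48k+14$ entries of ${\cal C}_2$ are pairwise distinct modulo $48k+14$. So the proof reduces entirely to showing that ${\cal C}_2=V_2\cup\overline{V_2}$ is a permutation of $\mathbb{Z}_{48k+14}$.

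The first step I would take is a parity split. Because $12k+4$ is even while $6k+1$, $12k+3$, and $48k+13$ are all odd, every entry of $V_2$ is odd and every entry of $\overline{V_2}$ is even. Hence $V_2$ and $\overline{V_2}$ are trivially disjoint modulo $48k+14$; and since $v\mapsto 48k+13-v$ is an involution on $\mathbb{Z}_{48k+14}$, the $24k+7$ entries of $\overline{V_2}$ are pairwise distinct iff those of $V_2$ are. So it remains to prove pairwise distinctness of the $24k+7$ entries of $V_2$ alone.

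For that I would split into three sub-cases: collisions inside the even-indexed family $\{6k+1+i(12k+4):0\le i\le 12k+3\}$, inside the odd-indexed family $\{12k+3+i(12k+4):0\le i\le 12k+2\}$, and between the two families. Each resulting congruence modulo $48k+14$, after canceling a common factor of $2$, becomes a congruence modulo the odd integer $24k+7$ whose coefficient on $i_1-i_2$ is $6k+2$. Lemma \ref{lemma:gcd.1}(1) makes $6k+2$ invertible modulo $24k+7$, with explicit inverse $4$ obtained from $4(6k+2)-(24k+7)=1$, so each congruence determines $i_1-i_2$ uniquely modulo $24k+7$. The two within-family cases force $i_1-i_2\equiv 0$, hence $i_1=i_2$, because every admissible index difference has absolute value strictly less than $24k+7$. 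The main obstacle is the cross-family case, where the forced residue is nonzero: I expect $i_1-i_2\equiv 12k+4\pmod{24k+7}$ and must verify that neither this nor its alternate representative $-(12k+3)$ lies in the tight window $-(12k+2)\le i_1-i_2\le 12k+3$; both miss by exactly one, which is the delicate point of the whole argument. Once the three sub-cases are settled, ${\cal C}_2$ contains $48k+14$ distinct residues and so ${\cal L}_2$ is Latin.
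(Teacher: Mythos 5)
Your proposal is correct and follows essentially the same route as the paper: the same three collision cases inside $V_2$, each reduced modulo $24k+7$ and resolved via the inverse $4$ of $6k+2$ supplied by Lemma~\ref{lemma:gcd.1}, with the cross-family case excluded by exactly the boundary analysis you describe. The only difference is organizational: where the paper verifies $x+y+1\not\equiv 0\pmod{48k+14}$ separately for each pair of rows of $V_2$ (each instance resting on a parity remark), you package all of these at once by noting that every entry of $V_2$ is odd while every entry of $\overline{V_2}$ is even.
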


\begin{proof}
 The entries in $V_2$ are all distinct   as verified by Equation \ref{eq:14-1}  for the case
 rows $2i$ and $2j$, where $0\leq i,j\leq 12k+3$, Equation \ref{eq:14-2}  for the case
rows $2i+1$ and $2j+1$,
where $0\leq i,j\leq 12k+2$, and  Equation \ref{eq:14-3}  for the case
rows $2i$ and $2j+1$, where $0\leq i\leq 12k+3$ and $0\leq j\leq 12k+2$.
\begin{eqnarray}
6k+1+i(12k+4)&\equiv& 6k+1+j(12k+4)\;(\mbox{mod } 48k+14),\nonumber\\
\Rightarrow(j-i)(6k+2)&\equiv& 0\;(\mbox{mod } 24k+7),\label{eq:14-1}\ \contradiction;\\
12k+3+i(12k+4)&\equiv& 12k+3+j(12k+4)\;(\mbox{mod } 48k+14),\nonumber\\
\Rightarrow(j-i)(6k+2)&\equiv& 0\;(\mbox{mod } 24k+7),\label{eq:14-2}\ \contradiction;\\
6k+1+i(12k+4)&\equiv& 12k+3+j(12k+4)\;(\mbox{mod } 48k+14),\nonumber\\
\Rightarrow(j-i)(6k+2)&\equiv& -3k-1\;(\mbox{mod } 24k+7),\nonumber\\
\Rightarrow j-i&\equiv& 4(-3k-1)\;(\mbox{mod } 24k+7),\label{eq:14-3}\end{eqnarray}
implying $j-i=12k+3$, or
$j=12k+3+i>12k+2$, which leads to a contradiction.

For any two rows containing entries $x$ and $y$ in $V_2$, parity conditions  and the following equations can be used to verify  $x+y+1\not\equiv 0$ (mod $48k+14$), specifically
 Equation \ref{eq:14-4}
for rows $2i$ and $2j$, Equation \ref{eq:14-5} for rows $2i+1$ and $2j+1$ and Equation \ref{eq:14-6} for rows $2i$ and $2j+1$.
\begin{eqnarray}
2(6k+1)+(i+j)(12k+4)+1&\equiv& 0\;(\mbox{mod } 48k+14),\nonumber\\
\Rightarrow 12k+3+(i+j)(12k+4)&\equiv& 0\;(\mbox{mod } 48k+14),\label{eq:14-4}\ \contradiction;\\
2(12k+3)+(i+j)(12k+4)+1&\equiv& 0\;(\mbox{mod } 48k+14),\nonumber\\
\Rightarrow 24k+7+(i+j)(12k+4)&\equiv& 0\;(\mbox{mod } 48k+14),\label{eq:14-5}\ \contradiction;\\
6k+1+12k+3+(i+j)(12k+4)+1&\equiv& 0\;(\mbox{mod } 48k+14),\nonumber\\
\Rightarrow 18k+5+(i+j)(12k+4)&\equiv& 0\;(\mbox{mod } 48k+14),\label{eq:14-6}\ \contradiction.
\end{eqnarray}
Thus the entries of ${\cal C}_2$ are all distinct and so
${\cal L}_2$ is a Latin square of order $48k+14$.
\end{proof}

\begin{lemma}\label{distinct.entries.L3.1}
The array ${\cal L}_3$ is a Latin square of order $48k+14$, $k\geq 0$.
\end{lemma}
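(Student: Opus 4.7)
The plan is to mirror the proof of \lref{distinct.entries.L2.1} step for step, since $V_3$ has exactly the same two-block structure as $V_2$, with the generator $12k+4$ replaced by $12k+5$ and the constants $6k+1, 12k+3$ replaced by $6k+2, 24k+8$. As before, it suffices to show (a) that the entries of $V_3$ are pairwise distinct modulo $48k+14$, and (b) that no two entries $x,y$ of $V_3$ satisfy $x+y+1\equiv 0\pmod{48k+14}$; together with the reflection construction of $\overline{V_3}$, these two facts force all $48k+14$ entries of ${\cal C}_3$ to be distinct, whence ${\cal L}_3$ is cyclic Latin.

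For (a), I would split into the three parity cases $(2i,2j)$, $(2i+1,2j+1)$, $(2i,2j+1)$. In each case the hypothesized equality reduces, after cancellation, to a congruence of the form $(j-i)(12k+5)\equiv c\pmod{48k+14}$ for some explicit constant $c$. By part~2 of \lref{lemma:gcd.1}, $\gcd(12k+5,48k+14)=1$ and the identity $(8k+3)(12k+5)-(2k+1)(48k+14)=1$ shows that $8k+3$ is the multiplicative inverse of $12k+5$. Multiplying through by $8k+3$ then yields an explicit value for $j-i$ modulo $48k+14$, which I need only compare with the allowed range ($|j-i|\le 12k+3$) to force a contradiction. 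The same-parity cases give $j\equiv i$ immediately; the mixed case is the delicate one and reduces (after a short computation) to $i-j\equiv 12k+4\pmod{48k+14}$, which lies outside the range.

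For (b), the three parity cases similarly reduce to congruences of the form $(i+j+\varepsilon)(12k+5)\equiv c'\pmod{48k+14}$ for small $\varepsilon\in\{0,1\}$ and an explicit $c'$. Again using the inverse $8k+3$ I can solve each congruence for $i+j$ modulo $48k+14$ and verify that the unique residue class falls outside the available range $0\le i+j\le 24k+5$. Concretely, the even--even case collapses to $(i+j+1)(12k+5)\equiv 0$, hence $i+j+1\equiv 0\pmod{48k+14}$, impossible; the odd--odd and mixed cases yield residues $24k+5$ and $36k+9$ respectively, both strictly larger than the maximum attainable $i+j$.

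The main obstacle is simply bookkeeping: each congruence has to be reduced modulo $48k+14$ by subtracting the appropriate multiples and then checked against the index range, and one must be careful that the ranges $0\le i\le 12k+3$ and $0\le j\le 12k+2$ are asymmetric in the mixed-parity case, so that both $i-j$ and $j-i$ must be tested. Once the inverse $8k+3$ is in hand, however, the computation is essentially mechanical and closely parallels Equations~\eref{eq:14-1}--\eref{eq:14-6}, so the proof will follow the same format, concluding that the entries of ${\cal C}_3$ are all distinct and hence ${\cal L}_3$ is a Latin square of order $48k+14$.
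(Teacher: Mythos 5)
Your proposal is correct and follows essentially the same route as the paper's proof: the same three parity cases, the same use of the inverse $8k+3$ of $12k+5$ from \lref{lemma:gcd.1}, and the same resulting residues ($j-i\equiv 36k+10$, i.e.\ $i-j\equiv 12k+4$, in the mixed case of (a), and $i+j\equiv 24k+5$ and $36k+9$ in (b)), each ruled out by the index ranges exactly as in Equations \eref{eq:14-7}--\eref{eq:14-12}.
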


\begin{proof}

The entries in $V_3$ are all distinct  as verified by Equation \ref{eq:14-7}  for the case
 rows $2i$ and $2j$, where $0\leq i,j\leq 12k+3$, Equation \ref{eq:14-8}  for the case
rows $2i+1$ and $2j+1$,
where $0\leq i,j\leq 12k+2$, and  Equation \ref{eq:14-9}  for the case
rows $2i$ and $2j+1$, where $0\leq i\leq 12k+3$ and $0\leq j\leq 12k+2$.
\begin{eqnarray}
6k+2+i(12k+5)&\equiv& 6k+2+j(12k+5)\;(\mbox{mod } 48k+14),\nonumber\\
\Rightarrow (j-i)(12k+5)&\equiv& 0\;(\mbox{mod } 48k+14),\label{eq:14-7}\ \contradiction;\\
24k+8+i(12k+5)&\equiv& 24k+8+j(12k+5)\;(\mbox{mod } 48k+14),\nonumber\\
\Rightarrow (j-i)(12k+5)&\equiv& 0\;(\mbox{mod } 48k+14), \label{eq:14-8}\ \contradiction;\\
6k+2+i(12k+5)&\equiv& 24k+8+j(12k+5)\;(\mbox{mod } 48k+14),\nonumber\\
j-i&\equiv& (-18k-6)(8k+3)\equiv 36k+10 \;(\mbox{mod } 48k+14), \label{eq:14-9}\end{eqnarray}
implying $j=36k+10+i>12k+2$ or $j=-12k-4+i<0$, which leads to a contradiction.

For any two rows containing entries $x$ and $y$ in $V_3$, $x+y+1\not\equiv 0$ (mod $48k+14$) as verified by Equation \ref{eq:14-10}
for rows $2i$ and $2j$, Equation \ref{eq:14-11} for rows $2i+1$ and $2j+1$ and Equation \ref{eq:14-12} for rows $2i$ and $2j+1$.
\begin{eqnarray}
2(6k+2)+(i+j)(12k+5)+1&\equiv& 0\;(\mbox{mod } 48k+14),\nonumber\\
\Rightarrow (i+j+1)(12k+5)&\equiv& 0\;(\mbox{mod } 48k+14),\nonumber\\
\Rightarrow i+j+1&\equiv& 0\;(\mbox{mod } 48k+14),
\label{eq:14-10}\ \contradiction;\\
2(24k+8)+(i+j)(12k+5)+1&\equiv& 0\;(\mbox{mod } 48k+14),\nonumber\\
\Rightarrow (i+j)(12k+5)&\equiv& -48k-17\;(\mbox{mod } 48k+14),\nonumber\\
\Rightarrow i+j\equiv (-48k-17)(8k+3)&\equiv& 24k+5\;(\mbox{mod } 48k+14)\label{eq:14-11}\ \contradiction;\\
6k+2+24k+8+(i+j)(12k+5)+1&\equiv& 0\;(\mbox{mod } 48k+14),\nonumber\\
\Rightarrow+(i+j)(12k+5)&\equiv& -30k-11\;(\mbox{mod } 48k+14),\nonumber\\
\Rightarrow  i+j\equiv (-30k-11)(8k+3)&\equiv& 36k+9 \;(\mbox{mod } 48k+14),
\label{eq:14-12}\ \contradiction.
\end{eqnarray}
Thus the entries of ${\cal C}_3$ are all distinct and so
${\cal L}_3$ is a Latin square of order $48k+14$.
\end{proof}

\begin{theorem}\label{theorem.1}
The Latin squares ${\cal L}_1$, ${\cal L}_2$ and ${\cal L}_3$ are
cyclic MNOLSs of order $48k+14$, $k\geq 0$.
\end{theorem}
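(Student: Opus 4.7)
The plan is to invoke \tref{rss}, since ${\cal L}_1$, ${\cal L}_2$, ${\cal L}_3$ are already known to be cyclic Latin squares (${\cal L}_1$ via \egref{example1}, and ${\cal L}_2$, ${\cal L}_3$ via \lref{distinct.entries.L2.1} and \lref{distinct.entries.L3.1}). What remains is to verify, for each of the three pairs $(\alpha,\beta)\in\{(1,2),(1,3),(2,3)\}$, that the column-vector differences $c_\alpha(i,0)-c_\beta(i,0)$, taken over all $0\leq i\leq 48k+13$, form a multiset in which $24k+7$ appears exactly twice and every other nonzero residue modulo $48k+14$ appears exactly once.

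Because all three column vectors share the reflection property, $c_\alpha(48k+13-i,0)\equiv -1-c_\alpha(i,0)\pmod{48k+14}$, so the differences in mirrored rows $i$ and $48k+13-i$ are additive inverses. As $24k+7$ is its own negative modulo $48k+14$, the task reduces to examining the $24k+7$ lower-half differences $d_i:=c_\alpha(i,0)-c_\beta(i,0)$ for $0\leq i\leq 24k+6$ and showing that (i) none is $0$, (ii) $24k+7$ appears among them exactly once, and (iii) the remaining $24k+6$ values form a signed system of representatives for $\{\pm 1,\pm 2,\ldots,\pm(24k+6)\}$; i.e., no two are equal and no two are additive inverses.

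For every pair $(\alpha,\beta)$ the natural split is by the parity of $i$, because $V_2$ and $V_3$ are defined in \eref{c2.1} and \eref{c3.1} by separate affine formulas on even and odd rows (while $c_1(i,0)=i$ is linear throughout). The even-indexed subsequence $(d_{2i})$ and the odd-indexed subsequence $(d_{2i+1})$ then become arithmetic progressions whose common differences are small integer combinations of $12k+4$ and $12k+5$. Injectivity of each progression on its index range follows directly from the coprimalities supplied by \lref{lemma:gcd.1}: the step $12k+5$ is coprime to $48k+14$, while a step divisible by $12k+4$ interacts with $48k+14$ only through the factor $2$, which is neutralised by the fixed parity of the indices, so that coprimality of $6k+2$ with $24k+7$ (statement \textbf{1} of that lemma) suffices. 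The analogous statements \textbf{3}--\textbf{4} cover the progressions arising in the $(2,3)$ comparison.

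The main obstacle is the combinatorial bookkeeping needed to glue the two progressions together for each pair: after establishing internal injectivity, one still must show that the even-row image and the odd-row image are disjoint (up to sign) and that their union, together with the distinguished value $24k+7$, covers the required signed representatives. I expect this to reduce to checking a handful of linear congruences in the same style as those already appearing in \lref{distinct.entries.L2.1} and \lref{distinct.entries.L3.1}: each either yields an immediate contradiction modulo $24k+7$ via \lref{lemma:gcd.1}, or pins $i\pm j$ down to a value forced outside the allowed range $\{0,\ldots,12k+3\}$ or $\{0,\ldots,12k+2\}$. Carrying out this verification three times, once per pair $(1,2)$, $(1,3)$, $(2,3)$, completes the hypothesis of \tref{rss} and thereby proves the theorem.
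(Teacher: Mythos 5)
Your proposal is correct and follows essentially the same route as the paper: invoke Theorem~\ref{rss}, use the reflection property to reduce to the $24k+7$ lower-half differences, split by parity into arithmetic progressions, and kill the resulting linear congruences with Lemma~\ref{lemma:gcd.1} or by forcing $i\pm j$ out of range, for each of the three pairs. The only difference is presentational: you state the reflection-property reduction and the pigeonhole step (that exactly one lower-half difference must equal $24k+7$) explicitly, whereas the paper leaves these implicit and simply grinds through the congruences.
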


\begin{proof}

Respectively, the differences between entries in rows $2i$ and $2i+1$ of $V_2$ and $V_1$, are
\begin{eqnarray*}
6k+1+i(12k+4)-2i&\equiv&(2i+1)(6k+1)\; (\mbox{mod } 48k+14), \\
12k+3+i(12k+4)-2i-1&\equiv& (2i+2)(6k+1)\; (\mbox{mod } 48k+14).
\end{eqnarray*}
These differences are all non-zero since in the first instance
 $(2i+1)(6k+1)$ is
odd and $48k+14$ is even and in the second instance if the difference $(2i+2)(6k+1)\equiv 0$
(mod $48k+14$), then by Lemma \ref{lemma:gcd.1}, $i+1\equiv 0$ (mod $24k+7$),
which implies $i=24k+6$, a contradiction.

\noindent The differences are all distinct  as verified by Equation \ref{eq:14-13} for
 rows $2i$ and $2j$ and for rows $2i+1$ and $2j+1$, and using a parity argument in Equation \ref{eq:14-14}  for rows $2i$ and $2j+1$.
 \begin{eqnarray}
 2(j-i)(6k+1)&\equiv& 0\;(\mbox{mod } 48k+14),\nonumber\\
 \Rightarrow (j-i)(6k+1)&\equiv& 0\;(\mbox{mod } 24k+7),
\label{eq:14-13}\ \contradiction;\\
2(j-i)(6k+1)&\equiv &-6k-1\;(\mbox{mod } 48k+14),\label{eq:14-14}\ \contradiction.
 \end{eqnarray}

\noindent In addition, any two distinct differences $x$ and
$y$, produced by corresponding rows of $V_2$ and $V_1$, satisfy
 $x+y\not\equiv 0$ (mod $48k+14$), as verified by Equation \ref{eq:14-15} for rows
$2i$ and $2j$, Equation \ref{eq:14-16} for rows $2i+1$ and $2j+1$ and parity arguments
together with Equation \ref{eq:14-17} for rows $2i$ and $2j+1$. In all such cases $x+y$ is congruent to
\begin{eqnarray}
(i+j+1)(6k+1)&\equiv& 0\;(\mbox{mod } 24k+7),\nonumber\\
\Rightarrow i+j+1 &\equiv& 0\;(\mbox{mod } 24k+7),
\label{eq:14-15}\ \contradiction;\\
(i+j+2)(6k+1)&\equiv& 0\;(\mbox{mod } 24k+7),\nonumber\\
\Rightarrow i+j+2 &\equiv& 0\;(\mbox{mod } 24k+7),
\label{eq:14-16}\ \contradiction;\\
6k+1+2(i+j+1)(6k+1)&\equiv& 0\;(\mbox{mod } 48k+14),\label{eq:14-17}\ \contradiction.
\end{eqnarray}

Respectively, the differences between entries in rows $2i$ and $2i+1$ of $V_3$ and $V_1$, are
\begin{eqnarray*}
6k+2+i(12k+5)-2i&\equiv& 6k+2+i(12k+3)\; (\mbox{mod } 48k+14),\\
24k+8+i(12k+5)-2i-1&\equiv& 24k+7+i(12k+3)\; (\mbox{mod } 48k+14).
\end{eqnarray*}

\noindent Equations \ref{eq:14-18} and \ref{eq:14-19} verify that these differences are all non-zero.
\begin{eqnarray}
 i\equiv (-6k-2)(24k+5)&\equiv& 12k+4\;(\mbox{mod } 48k+14),
\label{eq:14-18}\ \contradiction\\
 i\equiv (-24k-7)(24k+5)&\equiv& 24k+7\;(\mbox{mod } 48k+14),
\label{eq:14-19}\ \contradiction.
\end{eqnarray}
If two differences produced by rows $2i$ and $2j$ or by rows $2i+1$ and $2j+1$ are equal, then
$(j-i)(12k+3)\equiv 0$ (mod $48k+14$).
Now by Lemma \ref{lemma:gcd.1}, $i=j$. Equation \ref{eq:14-20} verifies that   two differences produced by rows $2i$ and $2j+1$ are never equal.
\begin{eqnarray}
(j-i)(12k+3)&\equiv& -18k-5\;(\mbox{mod } 48k+14),\nonumber\\
\Rightarrow j-i\equiv (-18k-5)(24k+5)&\equiv& 12k+3\; (\mbox{mod } 48k+14), \label{eq:14-20}
\end{eqnarray}
implying $j=i+12k+3>12k+2$, which leads to a contradiction.

\noindent In addition, any two distinct differences $x$ and
$y$, produced by corresponding rows of $V_3$ and $V_1$, satisfy
 $x+y\not\equiv 0$ (mod $48k+14$), as verified by Equation \ref{eq:14-21} for rows
$2i$ and $2j$, Equation \ref{eq:14-22} for rows $2i+1$ and $2j+1$ and
parity arguments together with Equation \ref{eq:14-23} for rows $2i$ and $2j+1$. In all such cases $x+y$ is congruent to
\begin{eqnarray}
12k+4+(i+j)(12k+3)&\equiv& 0\;(\mbox{mod } 48k+14),\nonumber\\
\Rightarrow i+j\equiv (-12k-4)(24k+5)&\equiv &24k+8\;(\mbox{mod } 48k+14),\label{eq:14-21}\ \contradiction;\\
(i+j)(12k+3)&\equiv& 0\;(\mbox{mod } 48k+14),\label{eq:14-22}\ \contradiction;\\
30k+9+(i+j)(12k+3)&\equiv& 0\;(\mbox{mod } 48k+14),\nonumber\\
\Rightarrow i+j\equiv(-30k-9)(24k+5)&\equiv& 36k+11\;(\mbox{mod } 48k+14),\label{eq:14-23}\ \contradiction.
\end{eqnarray}

Respectively, the differences between entries are in rows $2i$ and $2i+1$ of $V_3$ and $V_2$, are
\begin{eqnarray*}
6k+2-6k-1+i(12k+5-12k-4)&\equiv& i+1\; (\mbox{mod } 48k+14),\\
24k+8-12k-3+i(12k+5-12k-4)&\equiv& 12k+5+i\; (\mbox{mod } 48k+14).
\end{eqnarray*}

\noindent These are all non-zero and distinct.   In addition,
any two distinct differences $x$ and $y$ satisfy $x+y\not\equiv 0$ (mod $48k+14$).

By Lemmas \ref{distinct.entries.L2.1}, \ref{distinct.entries.L3.1} and the above arguments, the
Latin squares ${\cal L}_1$, ${\cal L}_2$ and ${\cal L}_3$ are cyclic
MNOLSs.
\end{proof}

%%%%%%%%%%%%%%%%%%%%%%%%%%%%%%%%%%%%%%%%%%%%%%%%%%%%%%%%%%
%%%%%%%%%%%%%%%%%%%%%%%%%%%%%%%%%%
\section{Three cyclic MNOLSs of Order  $48k+22$, $k\geq 0$}

In this section we construct two cyclic Latin squares ${\cal L}_2$ and
${\cal L}_3$ both of order $48k+22$ and show that ${\cal L}_1$
(constructed by Example \ref{example1} with $n=48k+22$),
${\cal L}_2$ and ${\cal L}_3$ are cyclic MNOLSs.
The following lemma is crucial in this section.

\begin{lemma}\label{lemma:gcd.2}
Let $k$ be an integer. Working modulo $48k+22$,
{\bf 1.} $\gcd(6k+3,24k+11)=1$;
{\bf 2.} $gcd(12k+7,48k+22)=1$;
{\bf 3.} $\gcd(6k+2,24k+11)=1$;
{\bf 4.} $\gcd(12k+5,48k+22)=1$.
\end{lemma}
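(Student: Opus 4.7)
The plan is to mirror the proof of \lref{lemma:gcd.1} exactly: for each of the four pairs, produce an explicit B\'ezout-style linear combination that equals $1$, and the gcd claim follows immediately. Since the four statements are independent arithmetic assertions about integers (the phrase ``working modulo $48k+22$'' is really just telling us in what ring these gcds will be used), there is no unifying trick; the entire proof reduces to four one-line identities, one per part.

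For part 1, I would observe that $4(6k+3)=24k+12$ is one larger than $24k+11$, giving the identity $4(6k+3)-(24k+11)=1$. For part 3, running one step of the Euclidean algorithm on $(24k+11,\,6k+2)$ produces remainder $3$, so I would back-substitute once through $6k+2=2k\cdot 3+2$ and $3-2=1$; the back-substitution yields $(2k+1)(24k+11)-(8k+5)(6k+2)=1$, which I would then verify by multiplying out both products. For parts 2 and 4, the relevant pairs are $(48k+22,\,12k+7)$ and $(48k+22,\,12k+5)$; in each case $4$ copies of the smaller number nearly fill the larger, leaving a small remainder ($6$ and $2$ respectively), after which one or two further Euclidean steps suffice. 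This leads, respectively, to identities such as $(2k+1)(48k+22)-(8k+3)(12k+7)=1$ and $(24k+9)(12k+5)-(6k+2)(48k+22)=1$, each again checked by direct expansion.

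The only ``obstacle'' is finding the coefficients; this is purely mechanical via the extended Euclidean algorithm, and there is nothing conceptual to overcome. I would present the four identities in a single compact display, exactly as \lref{lemma:gcd.1} does, letting verification of each be a routine multiplication. No case analysis, no parity arguments, and no appeal to earlier results are needed.
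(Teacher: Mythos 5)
Your proposal is correct and matches the paper's proof essentially verbatim: the paper's argument is exactly the four B\'ezout identities $4(6k+3)-(24k+11)=1$, $(2k+1)(48k+22)-(8k+3)(12k+7)=1$, $(2k+1)(24k+11)-(8k+5)(6k+2)=1$, and $(24k+9)(12k+5)-(6k+2)(48k+22)=1$, presented in a single display with verification left as routine expansion. There is nothing to add or correct.
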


\begin{proof}
The following equations verify the statements given in the lemma:
${\bf 1.\ } 4(6k+3)-(24k+11) = 1;\
{\bf 2.\ } (2k+1)(48k+22)-(8k+3)(12k+7) = 1;\
{\bf 3.\ } (2k+1)(24k+11)-(8k+5)(6k+2) = 1;\
{\bf 4.\ } (24k+9)(12k+5)-(6k+2)(48k+22) = 1.$
\end{proof}

\noindent Working modulo $48k+22$ we define the $(24k+11)\times 1$ matrices (column vectors)
$V_\alpha=[v_\alpha(i,0)]$, $\alpha=2,3$, by
\begin{eqnarray}
V_2&=&\{(2i,0,30k+13+i(12k+6))\mid 0\leq i\leq 12k+5\}\cup\nonumber\\
&&\{(2i+1,0,12k+5+i(12k+6))\mid 0\leq i\leq 12k+4\},\label{c2.2}\\
V_3&=&\{(2i,0,30k+14+i(12k+7))\mid 0\leq i\leq 12k+5\}\cup\nonumber\\
&&\{(2i+1,0,24k+12+i(12k+7))\mid 0\leq i\leq 12k+4\}.\label{c3.2}
\end{eqnarray}
 For $\alpha=2,3$, let
${\cal C}_\alpha=V_\alpha\cup \overline{V_\alpha}$, where
$$\overline{V_\alpha}=\{(48k+21-i,0,48k+21-v_\alpha(i,0))\mid 0\leq
i\leq 24k+10\}.$$
Note that ${\cal C}_\alpha$ has the reflection property.
Now define ${\cal L}_\alpha=[l_\alpha(i,j)]$, where
$l_\alpha(i,j)\equiv {\cal C}_\alpha(i,0)+j$ (mod $48k+22$) for $0\leq i,j\leq 48k+21$.

\begin{lemma}\label{distinct.entries.L2.2}
The array ${\cal L}_2$ is a Latin square of order $48k+22$, for $k\geq 0$.
\end{lemma}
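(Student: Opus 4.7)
The plan is to follow exactly the template of Lemma~\ref{distinct.entries.L2.1}. I need to show that all $48k+22$ entries in the column vector ${\cal C}_2 = V_2 \cup \overline{V_2}$ are distinct, for then each row of ${\cal L}_2$ is a cyclic shift of a permutation of ${\mathbb Z}_{48k+22}$ and ${\cal L}_2$ is automatically a Latin square. Because of the reflection property, this distinctness splits into two independent tasks: (a) the entries of $V_2$ are pairwise distinct, and (b) for every pair of entries $x,y$ of $V_2$ (possibly with $x=y$) we have $x+y+1 \not\equiv 0 \pmod{48k+22}$, which rules out any collision between $V_2$ and $\overline{V_2}$.

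For (a), I would separate into three subcases by the parity of the row indices. Comparing rows $2i$ and $2j$ gives $(j-i)(12k+6) \equiv 0 \pmod{48k+22}$, which after dividing by $2$ becomes $(j-i)(6k+3) \equiv 0 \pmod{24k+11}$; part~1 of Lemma~\ref{lemma:gcd.2} then forces $i=j$. The subcase of rows $2i+1$ and $2j+1$ is identical. The cross subcase rows $2i$ and $2j+1$ yields $(j-i)(12k+6) \equiv 18k+8 \pmod{48k+22}$, i.e. $(j-i)(6k+3) \equiv 9k+4 \pmod{24k+11}$; multiplying by the inverse $4$ supplied by part~1 of Lemma~\ref{lemma:gcd.2} gives $j-i \equiv 12k+5 \pmod{24k+11}$, so either $j = i + 12k+5$, which exceeds the bound $j \leq 12k+4$, or $j = i - 12k - 6 < 0$, a contradiction in both branches.

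For (b), a uniform parity argument handles all three row-pair types. The constant part of $x+y+1$ reduces, modulo $48k+22$, to $12k+5$ for rows $2i,2j$, to $24k+11$ for rows $2i+1, 2j+1$, and to $42k+19$ for rows $2i,2j+1$; each of these is odd, while $(i+j)(12k+6)$ is even and the modulus $48k+22$ is even, so their sum is odd and cannot vanish. This is precisely the parity mechanism used in Lemma~\ref{distinct.entries.L2.1}. The only delicate step is the cross-case arithmetic in (a), where both candidate values of $j-i$ modulo $24k+11$ must be shown to fall outside the admissible index range; once that is verified, the argument closes exactly as in the $48k+14$ case and establishes that ${\cal C}_2$ has $48k+22$ distinct entries, so ${\cal L}_2$ is a Latin square.
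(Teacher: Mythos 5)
Your proposal is correct and follows essentially the same route as the paper: reduce to distinctness of the entries of ${\cal C}_2$, split into the three row-parity cases and invoke part 1 of Lemma~\ref{lemma:gcd.2} for distinctness within $V_2$, and use the parity of the constants $12k+5$, $24k+11$, $42k+19$ to rule out $x+y+1\equiv 0$. Your treatment of the cross case is in fact slightly more explicit than the paper's Equation~\ref{eq:22-3}, which stops at $j-i\equiv 4(9k+4)\pmod{24k+11}$ without spelling out the two out-of-range branches.
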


\begin{proof}
 The entries in $V_2$ are all distinct   as verified by Equation \ref{eq:22-1}  for the case
 rows $2i$ and $2j$, where $0\leq i,j\leq 12k+5$, Equation \ref{eq:22-2}  for the case
rows $2i+1$ and $2j+1$,
where $0\leq i,j\leq 12k+4$, and  Equation \ref{eq:22-3}  for the case
rows $2i$ and $2j+1$, where $0\leq i\leq 12k+5$ and $0\leq j\leq 12k+4$.
\begin{eqnarray}
 30k+13+i(12k+6)&\equiv&  30k+13+j(12k+6)\;(\mbox{mod } 48k+22),\nonumber\\
\Rightarrow (j-i)(6k+3)&\equiv&  0\;(\mbox{mod }24k+11), \ \contradiction;\label{eq:22-1}\\
 12k+5+i(12k+6)&\equiv& 12k+5+j(12k+6)\;(\mbox{mod } 48k+22)\nonumber\\
\Rightarrow  (j-i)(6k+3)&\equiv&  0\;(\mbox{mod }24k+11), \ \contradiction;\label{eq:22-2}\\
 30k+13+i(12k+6)&\equiv&  12k+5+j(12k+6)\;(\mbox{mod } 48k+22),\nonumber\\
\Rightarrow  (j-i)(6k+3)&\equiv&  9k+4\;(\mbox{mod } 24k+11), \nonumber\\
\Rightarrow   j-i&\equiv& 4( 9k+4)\;(\mbox{mod } 24k+11), \ \contradiction.\label{eq:22-3}
\end{eqnarray}

For any two rows containing entries $x$ and $y$ in $V_2$, parity conditions and the following equations can be used to verify that $x+y+1\not\equiv 0$ (mod $48k+22$), specifically  Equation \ref{eq:22-4}
for rows $2i$ and $2j$, Equation \ref{eq:22-5} for rows  $2i+1$ and $2j+1$ and Equation \ref{eq:22-6} for rows $2i$ and $2j+1$.
\begin{eqnarray}
 2(30k+13)+(i+j)(12k+6)+1&\equiv& 0\;(\mbox{mod } 48k+22),\nonumber\\
\Rightarrow   12k+5+(i+j)(12k+6)&\equiv &0\; (\mbox{mod }48k+22),\ \contradiction;\label{eq:22-4}\\
 2(12k+5)+(i+j)(12k+6)+1&\equiv& 0\;(\mbox{mod } 48k+22),\nonumber\\
\Rightarrow   24k+11+(i+j)(12k+6)&\equiv& 0\;(\mbox{mod } 48k+22),\ \contradiction;\label{eq:22-5}\\
 30k+13+12k+5+(i+j)(12k+6)+1&\equiv& 0\;(\mbox{mod } 48k+22),\nonumber\\
\Rightarrow   42k+19+(i+j)(12k+6)&\equiv& 0\;(\mbox{mod } 48k+22),\ \contradiction.\label{eq:22-6}
\end{eqnarray}
Thus the entries of ${\cal C}_2$ are all distinct and so
${\cal L}_2$ is a Latin square of order $48k+22$.
\end{proof}

%%HHHHH
\begin{lemma}\label{distinct.entries.L3.2}
The array ${\cal L}_3$ is a Latin square of order $48k+22$, for $k\geq 0$.
\end{lemma}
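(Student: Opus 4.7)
The plan is to mirror the proof of \lref{distinct.entries.L2.2} (and of \lref{distinct.entries.L3.1} for the earlier order), leveraging statements 2 of \lref{lemma:gcd.2}, namely $\gcd(12k+7,48k+22)=1$. Just as in those proofs, establishing that ${\cal L}_3$ is a Latin square amounts to verifying two things about the column vector ${\cal C}_3=V_3\cup\overline{V_3}$:

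First, I would show that all entries of $V_3$ are pairwise distinct, split into three sub-cases. For rows $2i$ and $2j$ and for rows $2i+1$ and $2j+1$, equating entries yields $(j-i)(12k+7)\equiv 0\;(\mbox{mod }48k+22)$, which by \lref{lemma:gcd.2}(2) forces $i=j$, a contradiction. For the mixed case (rows $2i$ and $2j+1$), equating entries yields $(j-i)(12k+7)\equiv 6k+2\;(\mbox{mod }48k+22)$; multiplying by the inverse $-(8k+3)\equiv 40k+19$ of $12k+7$ modulo $48k+22$ (obtained from the gcd identity in \lref{lemma:gcd.2}(2)) gives $j-i\equiv 36k+16\;(\mbox{mod }48k+22)$, which lies outside the admissible range $-(12k+5)\le j-i\le 12k+4$ (both $36k+16$ and $36k+16-(48k+22)=-12k-6$ fall outside it), yielding a contradiction.

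Second, I would verify that for any two rows containing entries $x,y\in V_3$ one has $x+y+1\not\equiv 0\;(\mbox{mod }48k+22)$, which together with the reflection structure prevents collisions between $V_3$ and $\overline{V_3}$. For rows $2i$ and $2j$, the relevant congruence simplifies (using $60k+29\equiv 12k+7$) to $(i+j+1)(12k+7)\equiv 0\;(\mbox{mod }48k+22)$, so by \lref{lemma:gcd.2}(2) we need $i+j+1\equiv 0$, impossible since $0\le i+j+1\le 24k+11$. For rows $2i+1$ and $2j+1$ the congruence reduces to $(i+j)(12k+7)\equiv -3\;(\mbox{mod }48k+22)$; multiplying by $40k+19$ gives $i+j\equiv 24k+9\;(\mbox{mod }48k+22)$, which exceeds the maximum $24k+8$. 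For rows $2i$ and $2j+1$, I would similarly reduce and solve to obtain $i+j\equiv 36k+15\;(\mbox{mod }48k+22)$, outside the valid range $0\le i+j\le 24k+9$. In each subcase a parity check can be invoked alongside the equation (as in \lref{distinct.entries.L2.2}) before or after the reduction to streamline the argument.

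Once these two families of non-collisions are established, the reflection property of ${\cal C}_3$ guarantees that ${\cal C}_3$ has $48k+22$ distinct entries, and hence the cyclic development ${\cal L}_3=[l_3(i,j)]$ with $l_3(i,j)\equiv{\cal C}_3(i,0)+j\;(\mbox{mod }48k+22)$ is a Latin square. The main obstacle is purely computational: correctly reducing quadratic expressions such as $(6k+2)(40k+19)$ and $(42k+17)(40k+19)$ modulo $48k+22$, and carefully checking that the resulting value of $j-i$ or $i+j$ lies outside the admissible index range; the arithmetic is mechanical but offers several opportunities for sign or reduction slips.
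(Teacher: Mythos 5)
Your proposal is correct and follows essentially the same route as the paper: the same split into the three row-pairing cases for distinctness of $V_3$, the same use of $\gcd(12k+7,48k+22)=1$ with inverse $-(8k+3)\equiv 40k+19$, and the same reduction of the $x+y+1\not\equiv 0$ conditions to $i+j\equiv 24k+9$ and $i+j\equiv 36k+15$ lying outside the admissible index ranges. All of your computed residues ($36k+16$, $24k+9$, $36k+15$) agree with Equations \eref{eq:22-9}--\eref{eq:22-12} of the paper, so there is nothing to add.
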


\begin{proof}
 The entries in $V_3$ are all distinct  as verified by Equation \ref{eq:22-7}  for the case
 rows $2i$ and $2j$, where $0\leq i,j\leq 12k+5$, Equation \ref{eq:22-8}  for the case
rows $2i+1$ and $2j+1$,
where $0\leq i,j\leq 12k+4$, and  Equation \ref{eq:22-9}  for the case
rows $2i$ and $2j+1$, where $0\leq i\leq 12k+5$ and $0\leq j\leq 12k+4$.
\begin{eqnarray}
 30k+14+i(12k+7)&\equiv& 30k+14+j(12k+7)\;(\mbox{mod } 48k+22),\nonumber\\
\Rightarrow  (j-i)(12k+7)&\equiv &0\;(\mbox{mod } 48k+22),\ \contradiction;\label{eq:22-7} \\
 24k+12+i(12k+7)&\equiv& 24k+12+j(12k+7)\;(\mbox{mod } 48k+22),\nonumber\\
\Rightarrow (j-i)(12k+7)&\equiv &0\;(\mbox{mod } 48k+22),\ \contradiction;\label{eq:22-8}\\
 30k+14+i(12k+7)&\equiv& 24k+12+j(12k+7)\;(\mbox{mod } 48k+22),\nonumber\\
\Rightarrow  (j-i)(12k+7)&\equiv &6k+2\;(\mbox{mod } 48k+22),\nonumber\\
\Rightarrow  j-i\equiv (-8k-3)(6k+2)&\equiv& 36k+16\;(\mbox{mod } 48k+22),\label{eq:22-9}
\end{eqnarray}
implying $j=36k+16+i>12k+4$ or $j=-12k-6+i<0$, a contradiction.

For any two rows  containing entries $x$ and $y$ in $V_3$, $x+y+1\not\equiv 0$ (mod $48k+22$) as verified by Equation \ref{eq:22-10}
for rows $2i$ and $2j$, Equation \ref{eq:22-11} for rows $2i+1$ and $2j+1$ and Equation \ref{eq:22-12} for rows $2i$ and $2j+1$.
\begin{eqnarray}
 2(30k+14)+(i+j)(12k+7)+1&\equiv& 0\;(\mbox{mod } 48k+22),\nonumber\\
\Rightarrow   (i+j+1)(12k+7)&\equiv& 0\;(\mbox{mod } 48k+22),\ \contradiction;\label{eq:22-10}\\
 2(24k+12)+(i+j)(12k+7)+1&\equiv& 0\;(\mbox{mod } 48k+22),\nonumber\\
\Rightarrow  (i+j)(12k+7)&\equiv& -3\;(\mbox{mod } 48k+22),\nonumber\\
\Rightarrow  i+j\equiv (-3)(-8k-3)&\equiv& 24k+9\;(\mbox{mod } 48k+22),
\ \contradiction;\label{eq:22-11}\\
30k+14+24k+12+(i+j)(12k+7)+1&\equiv& 0\;(\mbox{mod } 48k+22),\nonumber\\
\Rightarrow  (i+j)(12k+7))&\equiv& -6k-5\;(\mbox{mod } 48k+22),\nonumber\\
\Rightarrow  i+j\equiv (-6k-5)(-8k-3)&\equiv& 36k+15\;(\mbox{mod } 48k+22),
\ \contradiction.\label{eq:22-12}
\end{eqnarray}
Thus the entries of ${\cal C}_3$ are all distinct and so
${\cal L}_3$ is a Latin square of order $48k+22$.
\end{proof}

\begin{theorem}\label{theorem.2}
The Latin squares ${\cal L}_1$, ${\cal L}_2$ and ${\cal L}_3$ are
cyclic MNOLSs of order $48k+22$, $k\geq 0$.
\end{theorem}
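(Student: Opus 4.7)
The proof will mirror the template of Theorem \ref{theorem.1}. Having already established in Lemmas \ref{distinct.entries.L2.2} and \ref{distinct.entries.L3.2} that $\mathcal{L}_2$ and $\mathcal{L}_3$ are Latin squares, what remains is to verify the quasi-difference condition of Theorem \ref{rss} for each of the three pairs $(\mathcal{L}_2,\mathcal{L}_1)$, $(\mathcal{L}_3,\mathcal{L}_1)$, $(\mathcal{L}_3,\mathcal{L}_2)$. For each such pair $(V_\alpha,V_\beta)$, I compute the $24k+11$ entrywise differences $d_i=v_\alpha(i,0)-v_\beta(i,0)\pmod{48k+22}$ for $0\le i\le 24k+10$. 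Because both $V_\alpha$ and $V_\beta$ carry the reflection property, the remaining $24k+11$ differences generated by $\overline{V_\alpha}$ and $\overline{V_\beta}$ are precisely $\{-d_i\}$. Hence it suffices to show (a) no $d_i$ is zero, (b) the $d_i$ are pairwise distinct, and (c) for all $i\ne j$, $d_i+d_j\not\equiv 0\pmod{48k+22}$; a standard counting argument then forces $D\cup(-D)=\mathbb{Z}_{48k+22}\setminus\{0\}$ with the single doubled value being $24k+11=m$.

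Each of (a), (b), (c) splits into three subcases according to the parity of the row indices (both even, both odd, or mixed). The even/even and odd/odd subcases reduce to congruences of the form $(j-i)c\equiv e\pmod{48k+22}$ or $(i+j+c')c\equiv 0\pmod{48k+22}$, where $c\in\{12k+6,12k+7\}$. After dividing out the obvious common factor of $2$, the coprimality identities of Lemma \ref{lemma:gcd.2} reduce the congruence modulo $24k+11$, and the unique solution either falls outside the admissible range $[0,12k+5]$ (respectively $[0,12k+4]$) for $i,j$, or else collapses to $i=j=12k+5$, in which case the common value is $m=24k+11$ and the apparent violation of (c) is exactly the legitimate doubled difference. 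The mixed even/odd subcases succumb to the same treatment, except that in several instances the congruence to be refuted has left-hand side of opposite parity to the right-hand side, giving an immediate parity contradiction.

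For the pair $(\mathcal{L}_3,\mathcal{L}_2)$ the arithmetic collapses dramatically: the even-row differences are $1+i$ for $0\le i\le 12k+5$ and the odd-row differences are $12k+7+i$ for $0\le i\le 12k+4$, together yielding the set $\{1,2,\ldots,24k+11\}$. Conditions (a)--(c) are then immediate, the doubled value being $24k+11$ as required. Thus the substantive work lies with the pairs $(\mathcal{L}_2,\mathcal{L}_1)$ and $(\mathcal{L}_3,\mathcal{L}_1)$.

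The main obstacle is purely bookkeeping: roughly eighteen linear congruences must each be solved modulo $48k+22$ (or its halves $24k+11$), each requiring the correct multiplicative inverse from Lemma \ref{lemma:gcd.2} and, in several cases, a careful parity observation. The one conceptually delicate point is recognising, for condition (c), exactly when the computation $i+j\equiv \text{something}\pmod{24k+11}$ forces $i=j=12k+5$ (or its odd-indexed analogue) so that the two differences are in fact equal rather than distinct; this is where the doubled value $m=24k+11$ is implicitly located and where one must be careful not to spuriously derive a contradiction.
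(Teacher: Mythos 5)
Your proposal follows the paper's proof essentially verbatim: the paper likewise reduces, for each of the three pairs, to checking that the half-set of row differences is nonzero, pairwise distinct, and free of two \emph{distinct} differences summing to zero, with the reflection property supplying the negated second half and the counting argument forcing $24k+11$ to be the unique doubled value (located, as you say, at $i=j=12k+5$ in the even-row case, and trivially for the pair $(\mathcal{L}_3,\mathcal{L}_2)$ whose differences are exactly $\{1,\dots,24k+11\}$). One minor slip: for the pairs involving $V_1$ the coefficients in the congruences are $12k+4$ and $12k+5$ rather than $12k+6$ and $12k+7$ (those arise only in the Latin-square lemmas), but Lemma \ref{lemma:gcd.2} supplies the coprimality facts for these values too, so nothing in your plan breaks.
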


\begin{proof} Respectively, for rows $2i$ and $2i+1$ the differences between entries  of $V_2$ and $V_1$, are
\begin{eqnarray*}
\begin{array}{rcl}
30k+13+i(12k+6)-2i&\equiv& 30k+13+i(12k+4)\; (\mbox{mod } 48k+22),\\
12k+5+i(12k+6)-2i-1&\equiv& 12k+4+i(12k+4)\; (\mbox{mod } 48k+22).
\end{array}
\end{eqnarray*}

\noindent These differences are all non-zero because  in the first instance
since $30k+13$ is odd but $12k+4$ and $48k+22$ are even and in the second
if  $(i+1)(12k+4)\equiv 0$
(mod $48k+22$), then  $i+1\equiv 0$ (mod $24k+11$),
implying $i=24k+10$, a contradiction.

\noindent The differences are all distinct  as verified by Equation \ref{eq:22-13} for
 rows $2i$ and $2j$ and for rows $2i+1$ and $2j+1$, and using a parity argument in Equation \ref{eq:22-14}  for rows $2i$ and $2j+1$.
 \begin{eqnarray}
  (j-i)(12k+4)&\equiv& 0\; (\mbox{mod }48k+22),\nonumber\\
\Rightarrow  (j-i)(6k+2)&\equiv& 0\; (\mbox{mod }24k+11),\ \contradiction;\label{eq:22-13}\\
  (j-i)(12k+4)&\equiv& 18k+9\; (\mbox{mod }48k+22),\ \contradiction.\label{eq:22-14}
 \end{eqnarray}

\noindent In addition,
any two distinct differences $x$ and $y$, produced by corresponding rows in $V_2$ and $V_1$,
satisfy $x+y\not\equiv 0$ (mod $48k+22$) as verified by Equation \ref{eq:22-15} for rows
$2i$ and $2j$, Equation \ref{eq:22-16} for rows $2i+1$ and $2j+1$ and parity condidions in Equation \ref{eq:22-17} for rows $2i$ and $2j+1$. In all such cases $x+y$ equals
\begin{eqnarray}
 (i+j+1)(6k+2)&\equiv& 0\; (\mbox{mod }24k+11),\nonumber\\
\Rightarrow    i+j+1&\equiv& 0\; (\mbox{mod }24k+11),\ \contradiction;
\label{eq:22-15}\\
 (i+j+2)(6k+2)&\equiv& 0\; (\mbox{mod }24k+11),\nonumber\\
\Rightarrow   i+j+2&\equiv& 0\; (\mbox{mod }24k+11),\ \contradiction;
\label{eq:22-16}\\
 (42k+17)+(i+j)(12k+4)&\equiv& 0\; (\mbox{mod }48k+22), \ \contradiction.
\label{eq:22-17}
 \end{eqnarray}

Respectively, the differences between entries in rows $2i$ and $2i+1$ of $V_3$ and $V_1$, are
\begin{eqnarray*}
30k+14+i(12k+7)-2i&\equiv& 30k+14+i(12k+5)\; (\mbox{mod } 48k+22),\\
24k+12+i(12k+7)-2i-1&\equiv& 24k+11+i(12k+5)\; (\mbox{mod } 48k+22).
\end{eqnarray*}
Equations \ref{eq:22-18} and \ref{eq:22-19} verify that these differences are all non-zero.
\begin{eqnarray}
 i\equiv (-30k-14)(24k+9)&\equiv&  12k+6\;(\mbox{mod }48k+22),\ \contradiction;
\label{eq:22-18}\\
 i\equiv (-24k-11)(24k+9)&\equiv& 24k+11\;(\mbox{mod }48k+22),\ \contradiction.
\label{eq:22-19}
\end{eqnarray}

\noindent The differences are all distinct  as verified by Equation \ref{eq:22-20} for
 rows $2i$ and $2j$, and rows $2i+1$ and $2j+1$, and using a parity argument in Equation \ref{eq:22-21}  for rows $2i$ and $2j+1$.
\begin{eqnarray}
(j-i)(12k+5)&\equiv& 0\;(\mbox{mod }48k+22),\ \contradiction;\label{eq:22-20}\\
 (j-i)(12k+5)&\equiv& 6k+3\;(\mbox{mod }48k+22),\nonumber\\
\Rightarrow   j-i\equiv(6k+3)(24k+9)&\equiv&12k+5\;(\mbox{mod }48k+22),\label{eq:22-21}
\end{eqnarray}
a contradiction, since $j=i+12k+5>12k+4.$

\noindent In addition,
any two distinct differences $x$ and $y$,  produced by corresponding rows in $V_3$ and $V_1$, satisfy $x+y\not\equiv 0$ (mod $48k+22$)
as verified by Equation \ref{eq:22-22} for rows
$2i$ and $2j$, Equation \ref{eq:22-23} for rows $2i+1$ and $2j+1$ and Equation \ref{eq:22-24} for rows $2i$ and $2j+1$. In all such cases $x+y$ equals
\begin{eqnarray}
  12k+6+(i+j)(12k+5)&\equiv& 0\;(\mbox{mod }48k+22),\nonumber\\
\Rightarrow  i+j\equiv(-12k-6)(24k+9)&\equiv& 24k+12\;(\mbox{mod }48k+22),\ \contradiction;\label{eq:22-22}\\
 (i+j)(12k+5)&\equiv& 0\;(\mbox{mod }48k+22),\ \contradiction;\label{eq:22-23}\\
 (6k+3)+(i+j)(12k+5)&\equiv& 0\;(\mbox{mod }48k+22)\nonumber\\
\Rightarrow  i+j\equiv(-6k-3)(24k+9)&\equiv& 36k+17\;(\mbox{mod }48k+22),\ \contradiction.\label{eq:22-24}
\end{eqnarray}

Respectively, the differences between entries are in rows $2i$ and $2i+1$ of $V_3$ and $V_2$, are
\begin{eqnarray*}
30k+14-30k-13+i(12k+7-12k-6)&\equiv &i+1\; (\mbox{mod } 48k+22),\\
24k+12-12k-5+i(12k+7-12k-6)&\equiv &12k+7+i\; (\mbox{mod } 48k+22).
\end{eqnarray*}

\noindent These differences are all non-zero and distinct. In addition,
any two distinct differences $x$ and $y$ satisfy $x+y\not\equiv 0$ (mod $48k+22$),
By Lemmas \ref{distinct.entries.L2.2}, \ref{distinct.entries.L3.2} and the above arguments the
Latin squares ${\cal L}_1$, ${\cal L}_2$ and ${\cal L}_3$ are cyclic
MNOLSs.
\end{proof}

%%%%%%%%%%%%%%%%%%%%%%%%%%%%%%%%%%
\section{Three cyclic  MNOLSs of Order $48k+38$, $k\geq 0$}
In this section we construct two cyclic Latin squares ${\cal L}_2$ and
${\cal L}_3$ both of order $48k+38$ and show that ${\cal L}_1$
(constructed by Example \ref{example1} with $n=48k+38$),
${\cal L}_2$ and ${\cal L}_3$ are cyclic MNOLSs.
The following lemma is crucial in this section.

\begin{lemma}\label{lemma:gcd.3}
Let $k$ be an integer. Working modulo $48k+38$,
{\bf 1.} $\gcd(6k+5,24k+19)=1$;
{\bf 2.} $\gcd(12k+11,48k+38)=1$;
{\bf 3.} $\gcd(6k+4,24k+19)=1$;
{\bf 4.} $\gcd(12k+9,48k+38)=1$.
\end{lemma}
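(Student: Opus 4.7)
The plan is to mirror exactly the template used for Lemmas \ref{lemma:gcd.1} and \ref{lemma:gcd.2}: for each of the four claims I will exhibit an explicit Bézout identity of the form $a\cdot x + b\cdot y = 1$, where $x,y$ are the two quantities in question. Once such an identity is displayed, coprimality is immediate and no further argument is required.

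The coefficients are discovered by running the Euclidean algorithm and then back-substituting. For (1), one step of reduction suffices: $4(6k+5)-(24k+19)=1$, so the coefficients are simply $(4,-1)$. For (2), I compute $(48k+38)-4(12k+11)=-6$, which reduces the problem to $\gcd(12k+11,6)$; since $12k+11\equiv 5\pmod{6}$ and $6-5\cdot 1=1$, back-substitution should yield an identity of the shape $(8k+7)(12k+11)-(2k+2)(48k+38)=1$. For (3), the first Euclidean step gives $(24k+19)-4(6k+4)=3$, reducing to $\gcd(6k+4,3)$; since $6k+4\equiv 1\pmod{3}$, back-substitution yields $(8k+5)(6k+4)-(2k+1)(24k+19)=1$. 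For (4), $(48k+38)-4(12k+9)=2$ reduces to $\gcd(12k+9,2)$; since $12k+9$ is odd, $(12k+9)-2(6k+4)=1$, and combining gives $(24k+17)(12k+9)-(6k+4)(48k+38)=1$.

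The only step that requires any attention is verifying each candidate identity by expansion, since the Euclidean back-substitution is easy to mis-sign. I have checked (2)--(4) mentally and each simplifies correctly to $1$: for example $(24k+17)(12k+9)=288k^{2}+420k+153$ and $(6k+4)(48k+38)=288k^{2}+420k+152$, whose difference is $1$. So the presentation of the proof will simply be four displayed equations, analogous to the proofs of Lemmas \ref{lemma:gcd.1} and \ref{lemma:gcd.2}, with no prose beyond the statement "the following equations verify the claims." There is no genuine obstacle; the effort is purely arithmetic bookkeeping to produce coefficients with the correct sign.
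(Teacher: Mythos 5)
Your proposal is correct and matches the paper's proof essentially verbatim: the paper also just displays the four Bézout identities $4(6k+5)-(24k+19)=1$, $(8k+7)(12k+11)-(2k+2)(48k+38)=1$, $(8k+5)(6k+4)-(2k+1)(24k+19)=1$, and $(24k+17)(12k+9)-(6k+4)(48k+38)=1$, which are exactly the ones you derived. All four identities expand correctly to $1$, so nothing further is needed.
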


\begin{proof}
The following equations verify the statements given in the lemma: $
{\bf 1.\ } 4(6k+5)-(24k+19) =1;\
{\bf 2.\ } (8k+7)(12k+11)-(2k+2)(48k+38) = 1;\
{\bf 3.\ } (8k+5)(6k+4)-(2k+1)(24k+19) = 1;\
{\bf 4.\ } (24k+17)(12k+9)-(6k+4)(48k+38) = 1.$
\end{proof}

\noindent Working modulo $48k+38$ we define the $(24k+19)\times 1$ matrices (column vectors)
$V_\alpha=[v_\alpha(i,0)]$, $\alpha=2,3$, by
\begin{eqnarray}
%V_1&=&\{(i,0,i),\mid 0\leq i\leq 24k+18\},\label{c1.3}\\
V_2&=&\{(2i,0,30k+23+i(12k+10))\mid 0\leq i\leq 12k+9\}\cup\nonumber\\
&&\{(2i+1,0,12k+9+i(12k+10))\mid 0\leq i\leq 12k+8\},\label{c2.3}\\
V_3&=&\{(2i,0,30k+24+i(12k+11))\mid 0\leq i\leq 12k+9\}\cup\nonumber\\
&&\{(2i+1,0,24k+20+i(12k+11))\mid 0\leq i\leq 12k+8\}.\label{c3.3}
\end{eqnarray}

\noindent For $\alpha=2,3$, let
${\cal C}_\alpha=V_\alpha\cup \overline{V_\alpha}$, where
$$\overline{V_\alpha}=\{(48k+37-i,0,48k+37-v_\alpha(i,0))\mid 0\leq
i\leq 24k+18\}.$$
Note that ${\cal C}_\alpha$ has the reflection property.
Now define ${\cal L}_\alpha=[l_\alpha(i,j)]$, where
$l_\alpha(i,j)\equiv {\cal C}_\alpha(i,0)+j$ (mod $48k+38$) for $0\leq i,j\leq 48k+37$.

\begin{lemma}\label{distinct.entries.L2.3}
The array ${\cal L}_2$ is a Latin square of order $48k+38$, $k\geq 0$.
\end{lemma}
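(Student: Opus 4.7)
The plan is to follow the template of Lemmas~\ref{distinct.entries.L2.1} and \ref{distinct.entries.L2.2}: establish that the entries of ${\cal C}_2=V_2\cup\overline{V_2}$ are all distinct modulo $48k+38$, from which it follows that ${\cal L}_2$ is Latin. Distinctness of $V_2$ and $\overline{V_2}$ from each other reduces, via the reflection rule $c_2(48k+37-i,0)\equiv 48k+37-v_2(i,0)$, to showing that no two entries $x,y$ of $V_2$ satisfy $x+y+1\equiv 0\pmod{48k+38}$.

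First I would verify that the entries of $V_2$ are pairwise distinct by splitting into the three subcases handled in the earlier sections: (i) two even-indexed rows $2i$ and $2j$ with $0\le i,j\le 12k+9$, which forces $(j-i)(12k+10)\equiv 0\pmod{48k+38}$ and thus $(j-i)(6k+5)\equiv 0\pmod{24k+19}$, contradicting $\gcd(6k+5,24k+19)=1$ from Lemma~\ref{lemma:gcd.3}; (ii) two odd-indexed rows $2i+1$ and $2j+1$ with $0\le i,j\le 12k+8$, which yields the same reduction; (iii) a mixed pair of rows $2i$ and $2j+1$, which produces $(j-i)(6k+5)\equiv 9k+7\pmod{24k+19}$, and multiplying by the inverse $4$ of $6k+5$ gives $j-i\equiv 12k+9\pmod{24k+19}$, forcing $j\ge 12k+9>12k+8$, a contradiction.

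Next I would verify that $x+y+1\not\equiv 0\pmod{48k+38}$ for any two entries $x,y$ of $V_2$, again in the same three cases. The point is that for rows $2i,2j$ the sum $x+y+1$ reduces to $12k+9+(i+j)(12k+10)$; for rows $2i+1,2j+1$ to $24k+19+(i+j)(12k+10)$; and for rows $2i,2j+1$ to $42k+33+(i+j)(12k+10)$. In each case the constant term ($12k+9$, $24k+19$, or $42k+33$) is odd, whereas $(i+j)(12k+10)$ and $48k+38$ are even, so a parity argument closes each subcase immediately.

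I do not anticipate any genuine obstacle: once the arithmetic constants are read off from the definition~\eqref{c2.3} and the coprimalities of Lemma~\ref{lemma:gcd.3} are invoked, all calculations are parallel to those already carried out in Sections~2 and~3. The only delicate step is the mixed case (iii) for distinctness, where one has to keep careful track of the modular reduction of $4(9k+7)$ modulo $24k+19$ to produce the bound $j-i=12k+9$ that exceeds the allowed range; this is the direct analogue of Equation~\eqref{eq:14-3} and poses no real difficulty.
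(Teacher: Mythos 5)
Your proposal is correct and follows essentially the same route as the paper: distinctness of the $V_2$ entries is checked in the same three row-parity cases via $\gcd(6k+5,24k+19)=1$ and the reduction $j-i\equiv 4(9k+7)\equiv 12k+9\pmod{24k+19}$ in the mixed case, and the non-occurrence of $x+y+1\equiv 0\pmod{48k+38}$ is settled by the same parity argument on the odd constants $12k+9$, $24k+19$, $42k+33$. No gaps.
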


\begin{proof}
 The entries in $V_2$ are all distinct   as verified by Equation \ref{eq:38-1}  for the case
 rows $2i$ and $2j$, where $0\leq i,j\leq 12k+9$, Equation \ref{eq:38-2}  for the case
rows $2i+1$ and $2j+1$,
where $0\leq i,j\leq 12k+8$,  and  Equation \ref{eq:38-3}  for the case
rows $2i$ and $2j+1$, where $0\leq i\leq 12k+9$ and $0\leq j\leq 12k+8$.
\begin{eqnarray}
30k+23+i(12k+10)&\equiv& 30k+23+j(12k+10)\;(\mbox{mod } 48k+38).\nonumber\\
\Rightarrow (j-i)(6k+5)&\equiv& 0 \;(\mbox{mod } 24k+19),\ \contradiction;\label{eq:38-1}\\
12k+9+i(12k+10)&\equiv &12k+9+j(12k+10)\;(\mbox{mod } 48k+38).\nonumber\\
\Rightarrow (j-i)(6k+5)&\equiv& 0 \;(\mbox{mod } 24k+19),\ \contradiction;\label{eq:38-2}\\
30k+23+i(12k+10)&\equiv& 12k+9+j(12k+10)\;(\mbox{mod } 48k+38),\nonumber\\
\Rightarrow
j-i&\equiv& 4(9k+7)\;(\mbox{mod } 24k+19),\label{eq:38-3}
\end{eqnarray}
implying $j=12k+9+i>12k+8$, which is   a contradiction.

 For any two rows containing entries $x$ and $y$ in $V_2$, $x+y+1\not\equiv 0$ (mod $48k+38$), specifically Equation  \ref{eq:38-4} for rows $2i$ and $2j$ of $V_2$, Equation \ref{eq:38-5} for rows  $2i+1$ and $2j+1$ and Equation \ref{eq:38-6} for rows $2i$ and $2j+1$.
 \begin{eqnarray}
 2(30k+23)+(i+j)(12k+10)+1&\equiv& 0\;(\mbox{mod } 48k+38), \nonumber\\
 \Rightarrow (i+j)(12k+10)&\equiv& -12k-9 \;(\mbox{mod } 48k+38),\ \contradiction;  \label{eq:38-4}\\
2( 12k+9)+(i+j)(12k+10)+1&\equiv& 0\;(\mbox{mod } 48k+38), \nonumber\\
 \Rightarrow (i+j)(12k+10)&\equiv& -24k-19 \;(\mbox{mod } 48k+38),\ \contradiction;\label{eq:38-5}\\
42k+32+(i+j)(12k+10)+1&\equiv& 0\;(\mbox{mod } 48k+38),  \nonumber\\
 \Rightarrow
 (i+j)(12k+10)&\equiv& -42k-33 \;(\mbox{mod } 48k+38),\ \contradiction.\label{eq:38-6}
 \end{eqnarray}

Thus the entries of ${\cal C}_2$ are all distinct and so
${\cal L}_2$ is a Latin square of order $48k+38$.
\end{proof}

\begin{lemma}\label{distinct.entries.L3.3}
The array ${\cal L}_3$ is a Latin square of order $48k+38$, $k\geq 0$.
\end{lemma}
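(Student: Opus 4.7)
The plan is to mirror exactly the structure of Lemma~\ref{distinct.entries.L3.2}, replacing the modulus $48k+22$ by $48k+38$ and the defining constants of $V_3$ accordingly. Since ${\cal L}_3$ is cyclic, it suffices to show that the entries of ${\cal C}_3 = V_3 \cup \overline{V_3}$ are pairwise distinct modulo $48k+38$. Using the reflection property defining $\overline{V_3}$, this reduces to two separate checks on $V_3$ alone: (a) the entries of $V_3$ are pairwise distinct, and (b) no two entries $x,y$ of $V_3$ satisfy $x+y+1\equiv 0 \pmod{48k+38}$.

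For (a), I split into the three subcases dictated by parity of row indices: rows $2i$ and $2j$; rows $2i+1$ and $2j+1$; and rows $2i$ and $2j+1$. In the first two subcases the congruence collapses to $(j-i)(12k+11)\equiv 0 \pmod{48k+38}$, and Lemma~\ref{lemma:gcd.3}(2) forces $i=j$. The mixed-parity subcase reduces to $(j-i)(12k+11)\equiv 6k+4 \pmod{48k+38}$; multiplying by the inverse $8k+7$ of $12k+11$ produced in Lemma~\ref{lemma:gcd.3}(2) yields an explicit value of $j-i$ modulo $48k+38$. I then check that neither this value nor its shift by $\pm(48k+38)$ can lie in the admissible range $-(12k+8)\le j-i \le 12k+9$, giving the desired contradiction.

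For (b), I again split into the same three parity cases. Rows $2i,2j$ give $(i+j+1)(12k+11)\equiv 0 \pmod{48k+38}$, so by Lemma~\ref{lemma:gcd.3}(2) we would need $i+j+1\equiv 0 \pmod{48k+38}$, which is impossible since $0\le i+j+1 \le 24k+19$. Rows $2i+1,2j+1$ give $(i+j)(12k+11)\equiv -3 \pmod{48k+38}$, and rows $2i,2j+1$ give an analogous linear congruence for $i+j$; in both cases multiplying by $8k+7$ produces an explicit value of $i+j$ that falls outside the admissible range $0\le i+j \le 24k+17$, yielding a contradiction. Once the entries of ${\cal C}_3$ are shown to be distinct the conclusion that ${\cal L}_3$ is a Latin square is immediate from the cyclic construction.

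The main obstacle is purely computational bookkeeping: one must correctly evaluate the products $(6k+4)(8k+7)$, $(-3)(8k+7)$ and the mixed-parity analogue modulo $48k+38$ and verify that none of the resulting residues can be realized as $j-i$ or $i+j$ in the tight admissible windows. No new conceptual ingredient is needed beyond the template of Lemma~\ref{distinct.entries.L3.2}; the $\gcd$ relations in Lemma~\ref{lemma:gcd.3} are tailored precisely so that every contradiction step goes through by the same scheme.
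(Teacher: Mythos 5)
Your plan is the same as the paper's proof: reduce the claim to (a) distinctness of the entries of $V_3$ and (b) the non-occurrence of $x+y+1\equiv 0 \pmod{48k+38}$, split each into the three parity cases, and in each case invert $12k+11$ via $8k+7$ (from Lemma~\ref{lemma:gcd.3}) to pin down $j-i$ or $i+j$ and show it misses the admissible window. The computations you indicate ($(j-i)(12k+11)\equiv 6k+4$ for the mixed case of (a); $(i+j+1)(12k+11)\equiv 0$ and $(i+j)(12k+11)\equiv -3$ in (b)) all agree with Equations \ref{eq:38-7}--\ref{eq:38-12} of the paper.

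There is, however, one concrete slip at exactly the tightest point of the argument. In part (b), for rows $2i+1$ and $2j+1$ you get $i+j\equiv (-3)(8k+7)\equiv 24k+17 \pmod{48k+38}$, and you claim this ``falls outside the admissible range $0\le i+j\le 24k+17$.'' It does not: $24k+17$ is the right-hand endpoint of the window you wrote down. The window you quoted is the one for the mixed case ($i\le 12k+9$, $j\le 12k+8$); for two odd-indexed rows both indices are at most $12k+8$, so the correct bound is $i+j\le 24k+16$, and only with this case-specific bound is $24k+17$ excluded (the paper's Equation \ref{eq:38-11} relies on precisely this). As written, one of your six contradiction steps does not go through; the fix is simply to use the per-case ranges rather than a single uniform one. (Your reversed window $-(12k+8)\le j-i\le 12k+9$ in part (a) --- it should be $-(12k+9)\le j-i\le 12k+8$ --- is harmless, since both representatives $36k+28$ and $-12k-10$ miss either interval.)
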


\begin{proof}
 The entries in $V_3$ are all distinct  as verified by Equation \ref{eq:38-7}  for the case
 rows $2i$ and $2j$, where $0\leq i,j\leq 12k+9$, Equation \ref{eq:38-8}  for the case
rows $2i+1$ and $2j+1$,
where $0\leq i,j\leq 12k+8$, and  Equation \ref{eq:38-9}  for the case
rows $2i$ and $2j+1$, where $0\leq i\leq 12k+9$ and $0\leq j\leq 12k+8$.
\begin{eqnarray}
30k+24+i(12k+11)&\equiv& 30k+24+j(12k+11)\;(\mbox{mod } 48k+38),\nonumber\\
 \Rightarrow
 (j-i)(12k+11)&\equiv& 0 \;(\mbox{mod } 48k+38),\ \contradiction;\label{eq:38-7}\\
 24k+20+i(12k+11)&\equiv &24k+20+j(12k+11)\;(\mbox{mod } 48k+38),\nonumber\\
 \Rightarrow (j-i)(12k+11)&\equiv& 0 \;(\mbox{mod } 48k+38),\ \contradiction;\label{eq:38-8}\\
 30k+24+i(12k+11)&\equiv& 24k+20+j(12k+11)\;(\mbox{mod } 48k+38),\nonumber\\
 \Rightarrow
 j-i\equiv (8k+7)(6k+4)&\equiv &36k+28\;(\mbox{mod } 48k+38),\label{eq:38-9}
\end{eqnarray}
implying  $j=36k+28+i>12k+8$ or $j=-12k-10+i$, which is a contradiction.

 For any two rows containing entries $x$ and $y$ in $V_3$, $x+y+1\not\equiv 0$ (mod $48k+38$), specifically Equation  \ref{eq:38-10} for rows $2i$ and $2j$ of $V_3$, Equation \ref{eq:38-11} for rows  $2i+1$ and $2j+1$ and Equation \ref{eq:38-12} for rows $2i$ and $2j+1$.
 \begin{eqnarray}
2( 30k+24)+(i+j)(12k+11)+1&\equiv& 0\;(\mbox{mod } 48k+38),\nonumber\\
 \Rightarrow  (i+j+1)(12k+11) &\equiv& 0 \;(\mbox{mod } 48k+38),\ \contradiction\label{eq:38-10}\\
2( 24k+20)+(i+j)(12k+11)+1&\equiv& 0\;(\mbox{mod } 48k+38),\nonumber\\
 \Rightarrow i+j\equiv (-3)(8k+7)&\equiv &24k+17 \;(\mbox{mod } 48k+38),\ \contradiction;\label{eq:38-11}\\
54k+44+(i+j)(12k+11)+1&\equiv& 0\;(\mbox{mod } 48k+38),\nonumber\\
 \Rightarrow i+j\equiv(-6k-7)(8k+7)&\equiv& 36k+27\;(\mbox{mod } 48k+38),\ \contradiction.\label{eq:38-12}
 \end{eqnarray}

Thus the entries of ${\cal C}_3$ are all distinct and so
${\cal L}_3$ is a Latin square of order $48k+38$.
\end{proof}

\begin{theorem}\label{theorem.3}
The Latin squares ${\cal L}_1$, ${\cal L}_2$ and ${\cal L}_3$ are cyclic
MNOLSs of order $48k+38$, $k\geq 0$.
\end{theorem}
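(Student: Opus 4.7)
The plan is to follow the template used in Theorems~\ref{theorem.1} and~\ref{theorem.2}. By Lemmas~\ref{distinct.entries.L2.3} and~\ref{distinct.entries.L3.3} together with Example~\ref{example1}, the three arrays ${\cal L}_1,{\cal L}_2,{\cal L}_3$ are already known to be Latin squares. By Theorem~\ref{rss} it therefore suffices to show, for each pair $\alpha\ne\beta$ in $\{1,2,3\}$, that among the $48k+38$ row-by-row differences ${\cal C}_\alpha(i,0)-{\cal C}_\beta(i,0)$ modulo $48k+38$ the value $24k+19$ occurs twice and every other nonzero residue occurs once. Because each ${\cal C}_\alpha$ has the reflection property, the differences in rows $24k+19,\dots,48k+37$ are the negatives of those in rows $0,\dots,24k+18$; so it is enough to verify for each pair that the $24k+19$ first-half differences are (i)~all nonzero, (ii)~pairwise distinct, and (iii)~no two distinct ones sum to $0$ modulo $48k+38$. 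These three conditions automatically force the first-half list to contain $n/2=24k+19$ exactly once and one representative from each of the remaining $(n-2)/2$ pairs $\{v,-v\}$.

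For the pair $(V_2,V_1)$ the row-$2i$ difference is $30k+23+i(12k+8)$, which is odd and hence nonzero modulo the even number $48k+38$, while the row-$(2i+1)$ difference is $(i+1)(12k+8)$, which is nonzero via Lemma~\ref{lemma:gcd.3}(3) (giving $\gcd(6k+4,24k+19)=1$). Distinctness within each parity class reduces to the same gcd fact, and across parities follows from the odd/even dichotomy just noted. The same-parity sum-to-zero equations reduce modulo $24k+19$ to $(i+j+1)(6k+4)\equiv 0$ or $(i+j+2)(6k+4)\equiv 0$ and admit no admissible solutions with $i\ne j$; the mixed-parity sum is $42k+31$ plus an even quantity, hence odd and so never $\equiv 0\pmod{48k+38}$.

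For the pair $(V_3,V_1)$, which is the main obstacle, the row-$2i$ and row-$(2i+1)$ first-half differences are $30k+24+i(12k+9)$ and $24k+19+i(12k+9)$ modulo $48k+38$. Because $12k+9$ is a unit modulo $48k+38$ with inverse $24k+17$ by Lemma~\ref{lemma:gcd.3}(4), every nonzero, distinctness, and sum-to-zero check reduces to an explicit linear congruence in $i$, $j-i$, or $i+j$. In each case the resulting value (for instance $i\equiv 12k+10$, $j-i\equiv 12k+9$, or $i+j\equiv 24k+20$) lies just outside the admissible range, yielding a contradiction. The one apparently troublesome equation is the odd-row sum-to-zero, which collapses to $i+j\equiv 0\pmod{48k+38}$ and forces $i=j=0$; but this means the two differences coincide, so no pair of \emph{distinct} differences sums to zero, and the common value $24k+19$ is precisely the unique first-half occurrence of $n/2$ that the reflection argument demands.

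For the pair $(V_3,V_2)$ the differences simplify dramatically: row $2i$ contributes $1+i$ for $0\le i\le 12k+9$ and row $2i+1$ contributes $12k+11+i$ for $0\le i\le 12k+8$, together producing exactly $\{1,2,\dots,24k+19\}$. These are manifestly nonzero and pairwise distinct, and the smallest and largest pairwise sums are $3$ and $48k+37$, so none is $\equiv 0\pmod{48k+38}$. Combining the three pair verifications with Theorem~\ref{rss} completes the proof.
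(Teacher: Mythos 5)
Your proposal is correct and follows essentially the same route as the paper: row-by-row differences for each of the three pairs, verified to be nonzero, pairwise distinct, and free of distinct pairs summing to zero, with the reflection property supplying the second half. The only (welcome) difference is that you make explicit the pigeonhole argument showing these three conditions force $n/2$ to occur exactly once in the first half, and you correctly flag the $i=j$ degenerate solutions of the sum-to-zero congruences that the paper silently dismisses.
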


\begin{proof}
Respectively, for rows $2i$ and $2i+1$ the differences between entries  of $V_2$ and $V_1$, are
\begin{eqnarray*}
30k+23+i(12k+10)-2i&\equiv& 30k+23+i(12k+8)\; (\mbox{mod } 48k+38),\\
12k+9+i(12k+10)-2i-1&\equiv &12k+8+i(12k+8)\; (\mbox{mod } 48k+38).
\end{eqnarray*}

\noindent These differences are all non-zero because in the first instance
$30k+23$ is odd but $12k+8$ and $48k+38$ are even and in the second
if $(i+1)(12k+8)\equiv 0$
(mod $48k+38$), then by Lemma \ref{lemma:gcd.3}, $i+1\equiv 0$ (mod $24k+19$),
which implies $i=24k+18$, a contradiction.

\noindent The differences are all distinct  as verified by Equation \ref{eq:38-13} for
 rows $2i$ and $2j$ and for rows $2i+1$ and $2j+1$, and using parity conditions in Equation \ref{eq:38-14}  for rows $2i$ and $2j+1$.
\begin{eqnarray}
(j-i)(6k+4)&\equiv& 0\;(\mbox{mod } 24k+19),\ \contradiction;\label{eq:38-13} \\
(j-i)(12k+8)&\equiv& 18k+15 \;(\mbox{mod } 48k+38),\ \contradiction.\label{eq:38-14}
 \end{eqnarray}

\noindent In addition,
any two distinct differences $x$ and $y$, produced by corresponding rows in $V_2$ and $V_1$,
satisfy $x+y\not\equiv 0$ (mod $48k+38$) as verified by Equation \ref{eq:38-15} for rows
$2i$ and $2j$, Equation \ref{eq:38-16} for $2i+1$ and $2j+1$ and using parity conditions together with Equation \ref{eq:38-17} for rows $2i$ and $2j+1$. In all such cases $x+y$ is congruent to
\begin{eqnarray}
(i+j+1)(6k+4)&\equiv& 0\;(\mbox{mod } 24k+19),\ \contradiction;\label{eq:38-15} \\
 (i+j+2)(6k+4)&\equiv& 0\;(\mbox{mod } 24k+19),\ \contradiction;\label{eq:38-16} \\
 (42k+31)+(i+j)(12k+8)&\equiv& 0\;(\mbox{mod } 24k+19),\ \contradiction.\label{eq:38-17}
\end{eqnarray}

Respectively, for rows $2i$ and $2i+1$ the differences between entries  of $V_3$ and $V_1$, are
\begin{eqnarray*}
30k+24+i(12k+11)-2i&\equiv& 30k+24+i(12k+9)\; (\mbox{mod } 48k+38),\\
24k+20+i(12k+11)-2i-1&\equiv& 24k+19+i(12k+9)\; (\mbox{mod } 48k+38).
\end{eqnarray*}

\noindent These differences are all non-zero because in the first instance
if $30k+24+i(12k+9)\equiv 0$ (mod $48k+38$), then by the proof of Lemma
\ref{lemma:gcd.3}, $i\equiv (-30k-24)(24k+17)\equiv 12k+10$ (mod $48k+38$), a contradiction and in the second instance
if $24k+19+i(12k+9)\equiv 0$ (mod $48k+38$), then by the proof of Lemma \ref{lemma:gcd.3},
$i\equiv (-24k-19)(24k+17)\equiv 24k+19$ (mod $48k+38$), a contradiction.

\noindent The differences are all distinct  as verified by Equation \ref{eq:38-18} for
 rows $2i$ and $2j$ and for rows $2i+1$ and $2j+1$, and  Equation \ref{eq:38-19}  for rows $2i$ and $2j+1$.
\begin{eqnarray}
(j-i)(12k+9)&\equiv& 0\; (\mbox{mod } 48k+38),\ \contradiction;\label{eq:38-18} \\
j-i\equiv (6k+5)(24k+17)&\equiv& 12k+9 \; (\mbox{mod } 48k+38),\label{eq:38-19}
\end{eqnarray}
implying  $j=12k+9+i>12k+8$, which is  a contradiction.

\noindent Finally,
for any two distinct differences $x$ and $y$ produced by corresponding rows in $V_3$ and $V_1$, $x+y\not\equiv 0$ (mod $48k+38$) as verified by Equation \ref{eq:38-20} for rows
$2i$ and $2j$, Equation \ref{eq:38-21} for $2i+1$ and $2j+1$ and parity conditions in Equation \ref{eq:38-22} for rows $2i$ and $2j+1$. In all such cases $x+y$ is congruent to
\begin{eqnarray}
12k+10+(i+j)(12k+9)&\equiv& 0\; (\mbox{mod } 48k+38),\nonumber\\
 \Rightarrow i+j\equiv (-12k-10)(24k+17)&\equiv& 24k+20\; (\mbox{mod } 48k+38),  \ \contradiction;\label{eq:38-20} \\
 (i+j)(12k+9) &\equiv& 0\; (\mbox{mod } 48k+38),\ \contradiction;\label{eq:38-21} \\
 6k+5+(i+j)(12k+9)&\equiv& 0\; (\mbox{mod } 48k+38),\nonumber\\
 \Rightarrow i+j\equiv (-6k-5)(24k+17)&\equiv& 36k+29  \; (\mbox{mod } 48k+38),\ \contradiction.\label{eq:38-22}
\end{eqnarray}

Respectively, the differences between entries are in rows $2i$ and $2i+1$ of $V_3$ and $V_2$, are
\begin{eqnarray*}
30k+24-30k-23+i(12k+11-12k-10)&\equiv& i+1\; (\mbox{mod } 48k+38)\\
24k+20-12k-9+i(12k+11-12k-10)&\equiv& 12k+11+i\; (\mbox{mod } 48k+38).
\end{eqnarray*}

\noindent These differences  are all non-zero and distinct. In addition,
any two distinct differences $x$ and $y$ satisfy $x+y\not\equiv 0$ (mod $48k+38$).

By Lemmas \ref{distinct.entries.L2.3}, \ref{distinct.entries.L3.3} and the above arguments, the
Latin squares ${\cal L}_1$, ${\cal L}_2$ and ${\cal L}_3$ are cyclic
MNOLSs.
\end{proof}

%%%%%%%%%%%%%%%%%%%%%%%%%%%%%%%%%%%%%%%%%%%%%%%%%%%%%%%%%%%%%%%%%%%%%%%%%%%%%%%%%%
%%%%%%%%%%%%%%%%%%%%%%%%%%%%%%%%%%%%%%%%%%%%%%%%%%%%%%%%%%%%%%%%%%%%%%%%%%%%%%%%%%%%%%%%%%%%%%%%%%

\section{Three cyclic MNOLSs of Order $48k+46$, $k\geq 0$}

In this section we construct two cyclic Latin squares ${\cal L}_2$ and
${\cal L}_3$ both of order $48k+46$ and show that ${\cal L}_1$
(constructed by Example \ref{example1} with $n=48k+46$),
${\cal L}_2$ and ${\cal L}_3$ are cyclic MNOLSs.
The following lemma is crucial in this section.

\begin{lemma}\label{lemma:gcd.4}
Let $k$ be an integer. Working modulo $48k+46$,
{\bf 1.} $\gcd(6k+6,24k+23)=1$;
{\bf 2.} $\gcd(12k+13,48k+46)=1$;
{\bf 3.}$\gcd(6k+5,24k+23)=1$;
{\bf 4.} $\gcd(12k+11,48k+46)=1$.
\end{lemma}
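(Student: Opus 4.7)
The plan is to imitate the template of Lemmas~\ref{lemma:gcd.1}, \ref{lemma:gcd.2}, and \ref{lemma:gcd.3}: for each of the four claims I will produce an explicit integer linear combination of the two quantities that equals $1$, which certifies coprimality. No deeper number theory is required; the only work is careful bookkeeping of coefficients.

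For part \textbf{1}, the identity can be read off by inspection: since $4(6k+6)=(24k+23)+1$, we get $4(6k+6)-(24k+23)=1$, in direct analogy with part \textbf{1} of the three preceding lemmas. For parts \textbf{2}, \textbf{3}, and \textbf{4}, I would run the Euclidean algorithm in the integers and back-substitute. Each case collapses quickly: reducing the larger quantity modulo the smaller shows that $\gcd(12k+13,48k+46)\mid 6$, $\gcd(6k+5,24k+23)\mid 3$, and $\gcd(12k+11,48k+46)\mid 2$. A one-line residue/parity check rules out every potential common factor, since $12k+13\equiv 1\pmod 6$, $6k+5\equiv 2\pmod 3$, and $12k+11$ is odd. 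A single round of back-substitution then converts the Euclidean calculation into the desired Bézout identity.

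Carrying out the back-substitution, the identities I would record are $(2k+2)(48k+46)-(8k+7)(12k+13)=1$ for part \textbf{2}, $(2k+2)(24k+23)-(8k+9)(6k+5)=1$ for part \textbf{3}, and $(24k+21)(12k+11)-(6k+5)(48k+46)=1$ for part \textbf{4}. Each of these can be verified by expanding both products and observing that the $k^2$- and $k$-terms cancel while the constant terms differ by exactly $1$. The only conceivable obstacle is an arithmetic slip in the coefficients; with the close systematic parallel to the preceding lemmas (shifts of the form $48k+c$ against $12k+d$, $24k+e$, $6k+f$) serving as a cross-check, there is no real mathematical difficulty.
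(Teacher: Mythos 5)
Your proposal is correct and takes essentially the same approach as the paper: all four claims are certified by explicit B\'ezout identities, and your identities for parts \textbf{1}--\textbf{3} coincide with the paper's (up to rearranging signs). In fact your part \textbf{4} identity $(24k+21)(12k+11)-(6k+5)(48k+46)=231-230=1$ is the correct one, whereas the identity printed in the paper, $(24k+21)(12k+11)+(6k+6)(48k+46)=1$, is a typo (its left side equals $576k^2+1080k+507$), so your version should be preferred.
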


\begin{proof}
The following equations verify the statements given in the lemma: $
{\bf 1.\ } 4(6k+6)-(24k+23)=1;\
{\bf 2.\ } (-8k-7)(12k+13)+(2k+2)(48k+46)=1;\
{\bf 3.\ } (-8k-9)(6k+5)+(2k+2)(24k+23)=1;\
{\bf 4.\ } (24k+21)(12k+11)+(6k+6)(48k+46)=1.$
\end{proof}

\noindent Working modulo $48k+46$ we define the $(24k+23)\times 1$ matrices (column vectors)
$V_\alpha=[v_\alpha(i,0)]$, $\alpha=2,3$, by
\begin{eqnarray}
%V_1&=&\{(i,0,i),\mid 0\leq i\leq 24k+22\},\label{c1.4}\\
V_2&=&\{(2i,0,6k+5+i(12k+12))\mid 0\leq i\leq 12k+11\}\cup\nonumber\\
&&\{(2i+1,0,12k+11+i(12k+12))\mid 0\leq i\leq 12k+10\},\label{c2.4}\\
V_3&=&\{(2i,0,6k+6+i(12k+13))\mid 0\leq i\leq 12k+11\}\cup\nonumber\\
&&\{(2i+1,0,24k+24+i(12k+13))\mid 0\leq i\leq 12k+10\}.\label{c3.4}
\end{eqnarray}
 For $\alpha=2,3$, let
${\cal C}_\alpha=V_\alpha\cup \overline{V_\alpha}$, where
$$\overline{V_\alpha}=\{(48k+45-i,0,48k+45-c_\alpha(i,0))\mid 0\leq
i\leq 24k+22\}.$$
\noindent Note that ${\cal C}_\alpha$ has the reflection property.
Now define ${\cal L}_\alpha=[l_\alpha(i,j)]$, where
$l_\alpha(i,j)\equiv {\cal C}_\alpha(i,0)+j$ (mod $48k+46$) for $0\leq i,j\leq 48k+45$.

\begin{lemma}\label{distinct.entries.L2.4}
The array ${\cal L}_2$ is a Latin square of order $48k+46$, $k\geq 0$.
\end{lemma}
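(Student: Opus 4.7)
The plan is to mirror the structure used in Lemmas~\ref{distinct.entries.L2.1}, \ref{distinct.entries.L2.2}, and \ref{distinct.entries.L2.3}. Since ${\cal L}_2$ is cyclically generated from ${\cal C}_2 = V_2 \cup \overline{V_2}$, and $\overline{V_2}$ is obtained from $V_2$ via the reflection $v \mapsto 48k+45-v$, showing that ${\cal L}_2$ is a Latin square reduces to two verifications: (i) the entries of $V_2$ are pairwise distinct modulo $48k+46$; and (ii) for any two entries $x, y$ of $V_2$, $x + y + 1 \not\equiv 0 \pmod{48k+46}$, which guarantees that no entry of $V_2$ coincides with any entry of $\overline{V_2}$.

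For (i), I would split into the three standard cases according to the parity of the row indices. For two even rows $2i, 2j$ or two odd rows $2i+1, 2j+1$, equality of the corresponding entries of $V_2$ reduces to $(j-i)(12k+12) \equiv 0 \pmod{48k+46}$, which after dividing through by $2$ becomes $(j-i)(6k+6) \equiv 0 \pmod{24k+23}$. By Lemma~\ref{lemma:gcd.4}(1), $\gcd(6k+6, 24k+23)=1$, forcing $j=i$. For the mixed case rows $2i$ and $2j+1$, equality reduces to $(j-i)(6k+6) \equiv -3k-3 \pmod{24k+23}$; multiplying by the inverse $4$ of $6k+6$ modulo $24k+23$ (from the same lemma) yields $j-i \equiv 12k+11 \pmod{24k+23}$, and since $0 \le i \le 12k+11$, $0 \le j \le 12k+10$, this places $j$ outside the permitted range.

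For (ii), I expect all three subcases to be handled by a parity argument, as happened in the $48k+14$ section. Indeed, summing entries plus one yields respectively $(i+j)(12k+12) \equiv -12k-11$, $(i+j)(12k+12) \equiv -24k-23$, and $(i+j)(12k+12) \equiv -18k-17 \pmod{48k+46}$. The left hand side is always even modulo the even modulus $48k+46$, whereas each right hand side is odd, producing the desired contradiction in every case.

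The only mildly delicate point is confirming that the value $j-i \equiv 12k+11$ obtained in the mixed parity subcase of (i) is genuinely unattainable: one must check that neither $j = i + 12k+11$ (which forces $j \ge 12k+11$, violating $j \le 12k+10$) nor the shifted value $j = i - 12k-12$ (which forces $j < 0$) lies in the allowed range. Once this observation is made, every step becomes a routine modular calculation parallel to those already carried out in the preceding three sections, and the conclusion that ${\cal C}_2$ consists of $48k+46$ distinct residues, hence that ${\cal L}_2$ is a Latin square, follows immediately.
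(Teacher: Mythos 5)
Your proposal is correct and follows essentially the same route as the paper: distinctness of the entries of $V_2$ via the three parity-of-row cases reduced modulo $24k+23$ using Lemma~\ref{lemma:gcd.4}, the out-of-range conclusion $j-i\equiv 12k+11$ in the mixed case, and the condition $x+y+1\not\equiv 0\pmod{48k+46}$ disposed of by parity in all three subcases. No gaps; your extra check that $j=i-12k-12$ is also out of range is a harmless refinement of the paper's argument.
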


\begin{proof}
 The entries in $V_2$ are all distinct
 as verified by Equation \ref{eq:46-1}  for the case
 rows $2i$ and $2j$, where $0\leq i,j\leq 12k+11$, Equation \ref{eq:46-2}  for
rows $2i+1$ and $2j+1$,
where $0\leq i,j\leq 12k+10$, and  Equation \ref{eq:46-3}  for the case
rows $2i$ and $2j+1$, where $0\leq i\leq 12k+11$ and $0\leq j\leq 12k+10$.
\begin{eqnarray}
6k+5+i(12k+12)&\equiv& 6k+5+j(12k+12)\;(\mbox{mod } 48k+46),\nonumber\\
\Rightarrow (j-i)(6k+6)&\equiv&0 \;(\mbox{mod } 24k+23),\ \contradiction;\label{eq:46-1}\\
12k+11+i(12k+12)&\equiv& 12k+11+j(12k+12)\;(\mbox{mod } 48k+46),\nonumber\\
\Rightarrow (j-i)(6k+6)&\equiv&0 \;(\mbox{mod } 24k+23),\ \contradiction;\label{eq:46-2}\\
6k+5+i(12k+12)&\equiv &12k+11+j(12k+12)\;(\mbox{mod } 48k+46),\nonumber\\
\Rightarrow j-i&\equiv& 4(-3k-3)\;(\mbox{mod } 24k+23),\ \contradiction.\label{eq:46-3}
\end{eqnarray}
implying  $j=12k+11+i>12k+10$, which is a contradiction.

For any two rows containing entries $x$ and $y$ in $V_2$, parity conditions  and the following equations can be used to verify  $x+y+1\not\equiv 0$ (mod $48k+46$), specifically
 Equation \ref{eq:46-4}
for rows $2i$ and $2j$, Equation \ref{eq:46-5} for rows $2i+1$ and $2j+1$ and Equation \ref{eq:46-6} for rows $2i$ and $2j+1$.
\begin{eqnarray}
2(6k+5)+(i+j)(12k+12)+1&\equiv& 0\;(\mbox{mod } 48k+46),\nonumber\\
\Rightarrow
(i+j)(12k+12)&\equiv&-12k-11 \;(\mbox{mod } 48k+46),\ \contradiction;\label{eq:46-4}\\
2(12k+11)+(i+j)(12k+12)+1&\equiv& 0\;(\mbox{mod } 48k+46),\nonumber\\
\Rightarrow (i+j)(12k+12)&\equiv&-24k-23 \;(\mbox{mod } 48k+46),\ \contradiction;\label{eq:46-5}\\
18k+16+(i+j)(12k+12)+1&\equiv &0\;(\mbox{mod } 48k+46),\nonumber\\
\Rightarrow (i+j)(12k+12)&\equiv&-18k-17 \;(\mbox{mod } 48k+46),\ \contradiction.\label{eq:46-6}
\end{eqnarray}
Thus the entries of ${\cal C}_2$ are all distinct and so
${\cal L}_2$ is a Latin square of order $48k+46$.
\end{proof}

\begin{lemma}\label{distinct.entries.L3.4}
The array ${\cal L}_3$ is a Latin square of order $48k+46$, $k\geq 0$.
\end{lemma}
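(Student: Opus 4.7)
The plan is to mirror the strategy of Lemmas \ref{distinct.entries.L3.1}, \ref{distinct.entries.L3.2} and \ref{distinct.entries.L3.3}. Since $\mathcal{C}_3=V_3\cup\overline{V_3}$ has the reflection property, establishing that $\mathcal{L}_3$ is a Latin square reduces to two claims: (a) the entries of $V_3$ are pairwise distinct; and (b) for any two entries $x,y$ of $V_3$ we have $x+y+1\not\equiv 0\pmod{48k+46}$, the latter ruling out collisions between $V_3$ and $\overline{V_3}$.

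For each of (a) and (b) I split on the parity of the two row indices, giving three sub-cases: both rows even (rows $2i$ and $2j$, with $0\leq i,j\leq 12k+11$), both odd (rows $2i+1$ and $2j+1$, with $0\leq i,j\leq 12k+10$), and mixed (rows $2i$ and $2j+1$). After cancelling the common constant terms $6k+6$ or $24k+24$ appearing in the definition of $V_3$, each sub-case will collapse to a single congruence of the form $(j-i)(12k+13)\equiv c$ or $(i+j)(12k+13)\equiv c\pmod{48k+46}$ for a specific small integer $c$ depending only on $k$.

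To dispose of these congruences I invoke part (2) of Lemma \ref{lemma:gcd.4}, which gives $\gcd(12k+13,\,48k+46)=1$, together with the explicit inverse $(12k+13)^{-1}\equiv -8k-7\pmod{48k+46}$ extracted from the B\'ezout identity stated there. Multiplying through and reducing the resulting quadratic-in-$k$ products modulo $48k+46$ should yield $j=i$ in each same-parity distinctness sub-case and $j-i\equiv 36k+34\pmod{48k+46}$ in the mixed distinctness sub-case; for (b), I expect $i+j+1\equiv 0$ in the even-even case, $i+j\equiv 24k+21$ in the odd-odd case, and $i+j\equiv 36k+33$ in the mixed case. In every instance the resulting value lies outside the admissible range for $j-i$ (namely $-(12k+11)\leq j-i\leq 12k+10$) or for $i+j$ (namely $0\leq i+j\leq 24k+22$), yielding the required contradiction.

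The only genuine obstacle is the routine but error-prone arithmetic of reducing products such as $(18k+18)(8k+7)$ and $(30k+31)(8k+7)$ modulo $48k+46$ via the inverse $-8k-7$; once those reductions have been carried out, the argument closes by the same range checks already used in the preceding three sections, and the pairwise distinctness of the entries of $\mathcal{C}_3$ follows.
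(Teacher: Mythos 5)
Your proposal is correct and matches the paper's own proof essentially step for step: the same reduction to distinctness within $V_3$ plus the condition $x+y+1\not\equiv 0\pmod{48k+46}$, the same parity case split, the same use of the inverse $-8k-7$ of $12k+13$ from Lemma~\ref{lemma:gcd.4}, and the same resulting residues ($36k+34$, $i+j+1\equiv 0$, $24k+21$, $36k+33$) eliminated by range checks. No substantive difference to report.
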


\begin{proof}
The entries in $V_3$ are all distinct  as verified by Equation \ref{eq:46-7}  for the case
 rows $2i$ and $2j$, where $0\leq i,j\leq 12k+11$, Equation \ref{eq:46-8}  for the case
rows $2i+1$ and $2j+1$,
where $0\leq i,j\leq 12k+10$, and  Equation \ref{eq:46-9}  for the case
rows $2i$ and $2j+1$, where $0\leq i\leq 12k+11$ and $0\leq j\leq 12k+10$.
\begin{eqnarray}
6k+6+i(12k+13)&\equiv& 6k+6+j(12k+13)\;(\mbox{mod } 48k+46),\nonumber\\
\Rightarrow (j-i)(12k+13)&\equiv&0 \;(\mbox{mod } 48k+46),\ \contradiction;\label{eq:46-7}\\
24k+24+i(12k+13)&\equiv& 24k+24+j(12k+13)\;(\mbox{mod } 48k+46),\nonumber\\
\Rightarrow (j-i)(12k+13)&\equiv&0 \;(\mbox{mod } 48k+46),\ \contradiction;\label{eq:46-8}\\
6k+6+i(12k+13)&\equiv& 24k+24+j(12k+13)\;(\mbox{mod } 48k+46),\nonumber\\
\Rightarrow j-i\equiv (-8k-7)(-18k-18)&\equiv& 36k+34\;(\mbox{mod } 48k+46),\label{eq:46-9}
\end{eqnarray}
implying $j=36k+34+i>12k+10$ or $j=-12k-12+i<0$,  which is a contradiction.

For any two rows containing entries $x$ and $y$ in $V_3$, $x+y+1\not\equiv 0$ (mod $48k+46$) as verified by Equation \ref{eq:46-10}
for rows $2i$ and $2j$, Equation \ref{eq:46-11} for rows $2i+1$ and $2j+1$ and Equation \ref{eq:46-12} for rows $2i$ and $2j+1$.
\begin{eqnarray}
2(6k+6)+(i+j)(12k+13)+1&\equiv& 0\;(\mbox{mod } 48k+46),\nonumber\\
\Rightarrow (i+j+1)(12k+13)&\equiv&0 \;(\mbox{mod } 48k+46),\ \contradiction;\label{eq:46-10}\\
2(24k+24)+(i+j)(12k+13)+1&\equiv& 0\;(\mbox{mod } 48k+46),\nonumber\\
\Rightarrow i+j\equiv (-3)(-8k-7)&\equiv& 24k+21\;(\mbox{mod } 48k+46),\ \contradiction;\label{eq:46-11}\\
30k+30+(i+j)(12k+13)+1&\equiv& 0\;(\mbox{mod } 48k+46),\nonumber\\
\Rightarrow i+j\equiv (-30k-31)(-8k-7)&\equiv& 36k+33\;(\mbox{mod } 48k+46),\ \contradiction.\label{eq:46-12}
\end{eqnarray}
Thus the entries of ${\cal C}_3$ are all distinct and so
${\cal L}_3$ is a Latin square of order $48k+46$.
\end{proof}

\begin{theorem}\label{theorem.4}
The Latin squares ${\cal L}_1$, ${\cal L}_2$ and ${\cal L}_3$ are
cyclic MNOLSs of order $48k+46$, $k\geq 0$.
\end{theorem}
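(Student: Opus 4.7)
The plan is to mirror the structure of Theorems~\ref{theorem.1},~\ref{theorem.2},~\ref{theorem.3}. By Lemmas~\ref{distinct.entries.L2.4} and~\ref{distinct.entries.L3.4} the arrays ${\cal L}_2, {\cal L}_3$ are Latin squares, and ${\cal L}_1$ is a Latin square by construction. So it remains to verify, for each of the three pairs $(V_2,V_1)$, $(V_3,V_1)$, $(V_3,V_2)$, that the column-wise differences modulo $48k+46$ cover every non-zero element of $\mathbb{Z}_{48k+46}$ exactly once, except $24k+23$ which occurs twice (this is the hypothesis of Theorem~\ref{rss}, and the reflection property then automatically provides the missing half of the difference set from $\overline{V_\alpha}$). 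Following the previous sections, this splits into three verifications per pair: the differences are non-zero, they are pairwise distinct, and no two distinct differences $x,y$ from the same pair satisfy $x+y\equiv 0 \pmod{48k+46}$.

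First I would compute the differences $V_2 - V_1$ row by row: for row $2i$, the difference is $6k+5+i(12k+10)\equiv(2i+1)(6k+5)$, and for row $2i+1$ it is $12k+10+i(12k+10)\equiv 2(i+1)(6k+5)$. Non-vanishing of the first follows from parity ($6k+5$ is odd, $48k+46$ is even); non-vanishing of the second reduces, via Lemma~\ref{lemma:gcd.4}(1), to $i+1\equiv 0 \pmod{24k+23}$, which forces $i=24k+22$, contradicting $i\le 12k+10$. Distinctness within the even rows and within the odd rows reduces to $(j-i)(6k+5)\equiv 0\pmod{24k+23}$, giving $i=j$; across parities the sum $2(j-i)(6k+5)\equiv -6k-5\pmod{48k+46}$ is ruled out by parity. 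The sum-to-zero conditions give $(i+j+1)(6k+5)\equiv 0$ and $(i+j+2)(6k+5)\equiv 0$ modulo $24k+23$ in the equal-parity cases (each giving a contradiction with the range of $i+j$), and a parity contradiction in the mixed-parity case.

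Next I would carry out the analogous computation for $V_3-V_1$: row $2i$ gives $6k+6+i(12k+11)$ and row $2i+1$ gives $24k+23+i(12k+11)$. Non-vanishing uses Lemma~\ref{lemma:gcd.4}(4): solving $i\equiv(-6k-6)(24k+21)$ or $i\equiv(-24k-23)(24k+21)$ modulo $48k+46$ yields values outside $[0,12k+11]$. Distinctness within the same parity follows directly from Lemma~\ref{lemma:gcd.4}(4), and in the mixed case one gets $j-i\equiv(6k+6)(24k+21)\pmod{48k+46}$, which I expect to collapse to $12k+11$, forcing $j>12k+10$. The sum conditions produce $12k+12+(i+j)(12k+11)\equiv 0$, $(i+j)(12k+11)\equiv 0$, and $6k+6+(i+j)(12k+11)\equiv 0$, each of which, after multiplying by the inverse $24k+21$ from Lemma~\ref{lemma:gcd.4}(4), yields a value of $i+j$ outside the feasible range.

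Finally, the differences $V_3-V_2$ simplify dramatically to $i+1$ in even rows and $12k+13+i$ in odd rows (for $0\le i\le 12k+10$), since $12k+13-12k-12=1$. These are visibly non-zero, pairwise distinct (the two arithmetic progressions have no overlap inside $\mathbb{Z}_{48k+46}$), and no two sum to zero modulo $48k+46$ because the largest attainable sum is well below $48k+46$. The main obstacle, as in the preceding sections, is just keeping the modular arithmetic and the range restrictions on $i,j$ straight; once the inverses from Lemma~\ref{lemma:gcd.4} are in hand, each case is a one-line computation. Combining all three verifications with the Latin-square lemmas and invoking Theorem~\ref{rss} completes the proof.
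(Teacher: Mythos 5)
Your proposal follows exactly the paper's route: reduce to the three checks (differences non-zero, pairwise distinct, no two distinct differences summing to zero) for each of the pairs $(V_2,V_1)$, $(V_3,V_1)$, $(V_3,V_2)$, invoke Lemmas~\ref{distinct.entries.L2.4} and~\ref{distinct.entries.L3.4} for the Latin property, use the inverses from Lemma~\ref{lemma:gcd.4} to solve each congruence, and observe that the $V_3-V_2$ differences degenerate to $i+1$ and $12k+13+i$. The only substantive issue is an arithmetic slip in the mixed-parity sub-cases for $V_3-V_1$: the odd rows there have intercept $24k+23$, so the distinctness condition is $(j-i)(12k+11)\equiv -18k-17\pmod{48k+46}$ (not a congruence with constant $6k+6$), which does give $j-i\equiv 12k+11$ as you predicted; more importantly, the sum condition is $30k+29+(i+j)(12k+11)\equiv 0\pmod{48k+46}$, not $6k+6+(i+j)(12k+11)\equiv 0$. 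This matters because your version would yield $i+j\equiv 12k+12$, which \emph{is} attainable with $0\le i\le 12k+11$ and $0\le j\le 12k+10$, so the contradiction you assert would not follow as written; the correct constant gives $i+j\equiv(-30k-29)(24k+21)\equiv 36k+35\pmod{48k+46}$, which lies outside $[0,24k+21]$ and restores the contradiction (this is Equation~\ref{eq:46-23} of the paper). With that constant corrected (and the citation for the non-vanishing of $(i+1)(12k+10)$ pointed at part~3 rather than part~1 of Lemma~\ref{lemma:gcd.4}), your argument coincides with the paper's proof.
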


\begin{proof}
Respectively, the differences between entries in rows $2i$ and $2i+1$ of $V_2$ and $V_1$, are
\begin{eqnarray*}
6k+5+i(12k+12)-2i&\equiv& 6k+5+i(12k+10)\; (\mbox{mod } 48k+46),\\
12k+11+i(12k+12)-2i-1&\equiv& 12k+10+i(12k+10)\; (\mbox{mod } 48k+46).
\end{eqnarray*}

\noindent These differences are all non-zero since in the first instance
2 divides $12k+10$ and $48k+46$ but does not divides $6k+5$ and in the second if $(i+1)(12k+10)\equiv 0$
(mod $48k+46$), then by Lemma \ref{lemma:gcd.4}, $i+1\equiv 0$ (mod $24k+23$),
which implies $i=24k+12$, a contradiction.

\noindent The differences are all distinct  as verified by Equation \ref{eq:46-13} for
 rows $2i$ and $2j$ and for rows $2i+1$ and $2j+1$, and  parity conditions in Equation \ref{eq:46-14}  for rows $2i$ and $2j+1$.
 \begin{eqnarray}
 (j-i)(12k+10)&\equiv& 0\;(\mbox{mod } 48k+46),\nonumber\\
 \Rightarrow (j-i)(6k+5)&\equiv& 0\;(\mbox{mod } 24k+23),\ \contradiction;\label{eq:46-13}\\
(j-i)(12k+10)&\equiv& -6k-5\;(\mbox{mod } 48k+46),\ \contradiction.\label{eq:46-14}
 \end{eqnarray}

\noindent In addition, any two different rows the two differences $x$ and
$y$, produced by corresponding rows of $V_2$ and $V_1$, satisfy
 $x+y\not\equiv 0$ (mod $48k+46$), as verified by Equation \ref{eq:46-15} for rows
$2i$ and $2j$, Equation \ref{eq:46-16} for rows $2i+1$ and $2j+1$ and parity
conditions in Equation \ref{eq:46-17} for rows $2i$ and $2j+1$. In all such cases $x+y$ is congruent to
\begin{eqnarray}
(i+j+1)(6k+5)&\equiv& 0\;(\mbox{mod } 24k+23),\ \contradiction;\label{eq:46-15}\\
(i+j+2)(6k+5)&\equiv& 0\;(\mbox{mod } 24k+23),\ \contradiction;\label{eq:46-16}\\
(18k+15)+(i+j)(12k+10)&\equiv& 0\;(\mbox{mod } 48k+46),\ \contradiction.\label{eq:46-17}
\end{eqnarray}

Respectively, the differences between entries in rows $2i$ and $2i+1$ of $V_3$ and $V_1$, are
\begin{eqnarray*}
6k+6+i(12k+13)-2i&\equiv& 6k+6+i(12k+11)\; (\mbox{mod } 48k+46),\\
24k+24+i(12k+13)-2i-1&\equiv& 24k+23+i(12k+11)\; (\mbox{mod } 48k+46).
\end{eqnarray*}

\noindent Equations  \ref{eq:46-18} and  \ref{eq:46-19} verify that these differences are all non-zero.
\begin{eqnarray}
i\equiv (-6k-6)(24k+21)&\equiv& 12k+12\;(\mbox{mod } 48k+46),\ \contradiction;\label{eq:46-18}\\
i\equiv (-24k-23)(24k+21)&\equiv& 24k+23\;(\mbox{mod } 48k+46),\ \contradiction.\label{eq:46-19}
\end{eqnarray}

\noindent If two differences produced by rows $2i$ and $2j$ or by rows $2i+1$ and $2j+1$ are equal, then
$(j-i)(12k+11)\equiv 0$ (mod $48k+46$).
Now by Lemma \ref{lemma:gcd.4}, $i=j$.
Equation \ref{eq:46-20} verifies that   two differences produced by rows $2i$ and $2j+1$ are never equal.
\begin{eqnarray}
(j-i)(12k+11)&\equiv& -18k-17\;(\mbox{mod } 48k+46),\nonumber\\
 \Rightarrow  j-i\equiv (-18k-17)(24k+21)&\equiv& 12k+11 \;(\mbox{mod } 48k+46),\label{eq:46-20}
\end{eqnarray}
implying $j=12k+11+i>12k+10$, which is a contradiction.

\noindent In addition, any two different rows the two differences $x$ and
$y$, produced by corresponding rows of $V_3$ and $V_1$, satisfy
 $x+y\not\equiv 0$ (mod $48k+46$), as verified by Equation \ref{eq:46-21} for rows
$2i$ and $2j$, Equation \ref{eq:46-22} for rows $2i+1$ and $2j+1$
and parity arguments together with Equation \ref{eq:46-23} for rows $2i$ and $2j+1$. In all such cases $x+y$ is congruent to
\begin{eqnarray}
12k+12+(i+j)(12k+11) &\equiv& 0\;(\mbox{mod } 48k+46),\nonumber\\
 \Rightarrow i+j\equiv (-12k-12)(24k+21)&\equiv& 24k+24\;(\mbox{mod } 48k+46),\ \contradiction;\label{eq:46-21}\\
(i+j)(12k+11)&\equiv& 0\;(\mbox{mod } 48k+46),\ \contradiction;\label{eq:46-22}\\
30k+29+(i+j)(12k+11)&\equiv& 0\;(\mbox{mod } 48k+46),\nonumber\\
 \Rightarrow i+j\equiv (-30k-29)(24k+21)&\equiv &36k+35\;(\mbox{mod } 48k+46),\ \contradiction.\label{eq:46-23}
\end{eqnarray}

Respectively, the differences between entries are in rows $2i$ and $2i+1$ of $V_3$ and $V_2$, are
\begin{eqnarray*}
6k+6-6k-5+i(12k+13-12k-12)&\equiv& i+1\; (\mbox{mod } 48k+46),\\
24k+24-12k-11+i(12k+13-12k-12)&\equiv &12k+13+i\; (\mbox{mod } 48k+46).
\end{eqnarray*}
These are all non-zero and distinct. In addition,
any two distinct differences $x$ and $y$ satisfy $x+y\not\equiv 0$ (mod $48k+46$).

By Lemmas \ref{distinct.entries.L2.4}, \ref{distinct.entries.L3.4} and the above arguments, the
Latin squares ${\cal L}_1$, ${\cal L}_2$ and ${\cal L}_3$ are
cyclic MNOLSs.
\end{proof}

\end{document}